\newcommand\N{\mathbb N}
\newcommand\R{\mathbb R}
\newcommand\PP{\mathbb P}
\newcommand\E{\mathbb E}
\newcommand\T{\mathbb T}
\newcommand\F{\mathscr F}
\newcommand\G{\mathscr G}
\newcommand\X{\mathbf X}
\newcommand\Y{\mathbf Y}
\newcommand\Ind{\boldsymbol 1}
\DeclarePairedDelimiter{\floor}{\lfloor}{\rfloor}
\DeclarePairedDelimiterX{\infdivx}[2]{(}{)}{%
  #1\;\delimsize|\delimsize|\;#2%
}
\newcommand{\kld}[2]{\ensuremath{\mathbf{d}\infdivx{#1}{#2}}\xspace}
\newcommand{\EX}[1]{\E\left[#1\right]}
\newcommand{\cEX}[2]{\E\left[#1 \,\middle|\, #2 \right]}
\theoremstyle{plain}
\newtheorem{theorem}{Theorem}[section]
\newtheorem*{theorem*}{Theorem}
\newtheorem{lemma}[theorem]{Lemma}
\newtheorem*{lemma*}{Lemma}
\newtheorem{corollary}[theorem]{Corollary}
\newtheorem*{corollary*}{Corollary}
\newtheorem{proposition}[theorem]{Proposition}
\newtheorem*{proposition*}{Proposition}
\newtheorem{fact}[theorem]{Fact}
\newtheorem*{fact*}{Fact}
\theoremstyle{definition}
\newtheorem{definition}[theorem]{Definition}
\newtheorem{definition*}[theorem]{Definition}
\newtheorem{assumption}[theorem]{Assumption}
\theoremstyle{remark}
\newtheorem{remark}[theorem]{Remark}
\newtheorem*{remark*}{Remark}
\numberwithin{equation}{section}
\begin{document}

\title{Efficient approximation of branching random walk Gibbs measures}
\author{
Fu-Hsuan~Ho\thanks{
  Institut de Math\'{e}matiques de Toulouse, CNRS UMR5219.
  \textit{Postal address:} Institut de Math\'{e}matiques de Toulouse,
  Universit\'{e} Toulouse 3 Paul Sabatier,
  118 Route de Narbonne, 31062 Toulouse Cedex 9, France.
  \textit{Email:} \texttt{fu-hsuan.ho AT math.univ-toulouse.fr}, \texttt{pascal.maillard AT math.univ-toulouse.fr}. Supported in part by grants ANR-20-CE92-0010-01 and ANR-11-LABX-0040 (ANR program ``Investissements d'Avenir'').
  }
\and
Pascal~Maillard\footnotemark[1]
}
\date{\today}
\maketitle

\begin{abstract}
Disordered systems such as spin glasses have been used extensively as models for high-dimensional random landscapes and studied from the perspective of optimization algorithms. In a recent paper by L. Addario-Berry and the second author, the \emph{continuous random energy model (CREM)} was proposed as a simple toy model to study the efficiency of such algorithms. The following question was raised in that paper: what is the threshold $\beta_G$, at which sampling (approximately) from the Gibbs measure at inverse temperature $\beta$ becomes algorithmically hard?

This paper is a first step towards answering this question. We consider the branching random walk, a time-homogeneous version of the continuous random energy model. We show that a simple greedy search on a renormalized tree yields a linear-time algorithm which approximately samples from the Gibbs measure, for every  $\beta < \beta_c$, the (static) critical point. 
More precisely, we show that for every $\varepsilon>0$, there exists such an algorithm such that the specific relative entropy between the law sampled by the algorithm and the Gibbs measure of inverse temperature $\beta$ is less than $\varepsilon$ with high probability. 

In the supercritical regime $\beta > \beta_c$, we provide the following hardness result. Under a mild regularity condition, for every $\delta > 0$, there exists $z>0$ such that the running time of any given algorithm approximating the Gibbs measure stochastically dominates a geometric random variable with parameter $e^{-z\sqrt{N}}$ on an event with probability at least $1-\delta$.

\paragraph{Keywords:} branching random walk ; Gibbs measure ; Kullback--Leibler divergence.
 
\paragraph{MSC2020 subject classifications:} 68Q17, 82D30, 60K35, 60J80. 
\end{abstract}

\section{Introduction} \label{sec:intro}


We consider the following family of branching random walks. An initial particle is located at the origin. It gives birth to $d$ child particles, $d\geq 2$, scattering on the real line, and each of the child particles produces $d$ child particles again, and so on. The displacement of each particle is independent of the past of the process and of the displacements of its siblings. The genealogy of the particles can be represented by a $d$-ary tree $\T_N$, identifying particles with the vertices of the tree. We denote by $X_v$ the location of a particle $v\in\T_N$.

Addario-Berry and Maillard \cite{AB&M20} considered algorithms that, for a given $x>0$, find a leaf $v$ of $\T_N$ such that $X_v\geq xN$ in the framework of the continuous random energy model (CREM). This is a binary time-inhomogeneous branching random walk with Gaussian displacements. More precisely, the CREM is a Gaussian process whose covariance function is given by 
\[\EX{X_vX_w} = A\left(\frac{\abs{v\wedge w}}{N}\right), \quad \forall v,w\in\T_n\]
where $A:[0,1]\rightarrow [0,1]$ is an increasing function with $A(0)=0$ and $A(1)=1$ and $\abs{v\wedge w}$ is the depth of the most recent common ancestor of $v$ and $w$.
The authors proved the existence of a threshold $x_*$ such that the following holds: a) for every $x<x_*$, there exists a polynomial-time algorithm that can accomplish the task with high probability, b) for every $x>x_*$, every such algorithm has a running time which is at least exponential in $N$ with high probability. The authors also raised the question of the complexity of sampling a \emph{typical} vertex of value roughly $xN$, which can be interpreted as sampling a vertex according to a Gibbs measure with a certain parameter $\beta$ depending on $x$.  

The present work attacks this problem in the simpler setting of the (homogeneous) branching random walk, corresponding to the case $A(x)=x$ of the CREM in the special case of Gaussian displacements. The Gibbs measure is a probability measure on the leaves $v$ of $\T_N$ with weight proportional to $e^{\beta X_v}$, where $\beta>0$ is a given parameter called the \emph{inverse temperature}. We show that there exists a threshold $\beta_c>0$ such that the following holds: a) in the subcritical regime $\beta<\beta_c$, there exists a linear-time algorithm such that with high probability, the specific relative entropy between the law sampled by the algorithm and the Gibbs measure of inverse temperature $\beta$ is arbitrarily small, b) in the supercritical regime $\beta>\beta_c$, under a mild regularity condition, we show that with high probability, the running time of any given algorithm approximating the Gibbs measure in this sense is at least stretched exponential in $N$.

\subsection{Notations and main results} \label{sec:notation}

Let $\T$ be a rooted $d$-ary tree, where $d\geq 2$. The depth of a vertex $v\in \T$ is denoted by $\abs{v}$. We denote the root by $\varnothing$, and
any vertex $v$ with depth $n\geq 1$ is indexed by a string $v_1\cdots v_n\in\{1,\ldots,d\}^n$. For any $v,w\in\T$, we write $v\leq w$ if $v$ is a prefix of $w$ and write $v<w$ if $v$ is a prefix of $w$ strictly shorter than $w$.
We denote $\T_n$ to be the subtree of $\T$ containing vertices of depth less or equal to $n$
and $\partial\T_n$ to be the leaves of $\T_n$. 

Let $\Y = (Y_0,\ldots,Y_{d-1})$ be a $d$-dimensional random vector. Let $(\Y^v)_{v\in\T}$ be iid copies of $\Y$ where $\Y^v = (Y_{v0},\ldots, Y_{v(d-1)})$ -- this uniquely defines $Y_u$ for every $u\in\T\backslash\{\varnothing\}$. Define the process $\X = (X_v)_{v\in\T}$ by
\begin{align*}
\begin{cases}
X_\varnothing = 0, & \\[6pt]
X_v = \sum_{\varnothing < w \le v} Y_w, & \abs{v} \ge 1.
\end{cases}
\end{align*}
The process $\X$ is called the \emph{branching random
walk with increments $\Y$}.

It is well-known that $\X$ has the branching property: let $\F=(\F_n)_{n\geq 0}$ be its natural filtration. For any $v\in\T$ with $\abs{v} = n$, define
\[\X^v = (X^v_w)_{\abs{w}\geq 0} = (X_{vw}-X_v)_{\abs{w}\geq 0}.\]
Then $(\X^v)_{|v|=n}$ are iid copies of $\X$ and independent of $\F_n$.

\paragraph{Gibbs measures.}
Define the following function
\begin{align*}
  \varphi(\beta) = \log\left(\EX{\sum_{i=0}^{d-1} e^{\beta Y_i}}\right)\in (-\infty,\infty],\quad \beta\in\R,
\end{align*}
and set $\mathcal D(\varphi) = \{\beta\in\R: \varphi(\beta)<\infty\}$.
It is well known that $\varphi$ is convex and that it is smooth on $\mathcal D(\varphi)^\circ$, the interior of $\mathcal D(\varphi)$. Throughout the article, we assume
the following.
\begin{assumption} \label{assump:phi}
$0\in\mathcal D(\varphi)^\circ$.
\end{assumption}

For $n\in\N$, define the following (normalized) partition functions
\begin{align*}
  W_{\beta,n} &= \sum_{\abs{v} = n} e^{\beta X_v - \varphi(\beta)n}, \quad n\geq 0, \\
  W^u_{\beta,n} &= \sum_{\abs{w}=n} e^{\beta X_w^u - \varphi(\beta)n},\quad u\in \T \text{ and } n\geq 0.
\end{align*}
Note that for every $m\le n$,
\begin{align} \label{eq:W decomp}
  W_{\beta,n} = \sum_{\abs{u}=m} e^{\beta X_u - \varphi(\beta)m}\cdot W^u_{\beta,n-m}.
\end{align}
For $\beta\in\R$ and $n\in\N$, we now define the Gibbs measure of parameter $\beta$ on $\T_n$ to be
\begin{align} \label{def:Gibbs0}
  \mu_{\beta,n}(u) = e^{\beta X_u - \varphi(\beta)m}\cdot \frac{W^u_{\beta,n-m}}{W_{\beta,n}},\quad \abs{u}=m\leq n.
\end{align}
Note that $\mu_{\beta,n}$ is usually defined on $\partial \T_n$ only, but it will be helpful to define it on the whole tree $\T_n$. By \eqref{eq:W decomp}, for every $m\le n$, the restriction of $\mu_{\beta,n}$ to $\partial \T_m$ is a probability measure. Similarly, we can define
\begin{align}
\mu_{\beta,n}^v(u)=e^{\beta X^v_u-\varphi(\beta)m}\cdot\frac{W^{vu}_{\beta,n-m}}{W^v_{\beta,n}},\quad v\in\T \text{ and } \abs{u}=m\leq n. 
\label{def:Gibbs1}
\end{align}

The free energy of the branching random walk has been calculated by Derrida and Spohn~\cite{DerridaSpohn} (and can also be deduced from Biggins~\cite{BigginsChernoff})
\[
\lim_{n\to\infty} \frac 1{\beta n} \log \sum_{\abs{v} = n} e^{\beta X_v} =  \begin{cases}
\frac 1 \beta \varphi(\beta)&\text{if $\beta \in (0,\beta_c)$}\\
\frac 1 {\beta_c} \varphi(\beta_c) &\text{if $\beta \ge \beta_c$},
\end{cases}
\]
where the limit is meant to be in probability and where the critical inverse temperature $\beta_c$ is defined by 
\[\beta_c=\sup\{\beta \in \mathcal D(\varphi)^\circ \mid \beta\varphi'(\beta) < \varphi(\beta)\} \in (0,\infty].\]
We will mostly be interested in the phase $\beta < \beta_c$. In this phase, we recall the following result.
\begin{fact}[\cite{KP76,Biggins77,Lyons97}]
\label{fact:W}
If $\beta < \beta_c$, then the martingale $(W_{\beta,n})_{n\ge0}$ is uniformly integrable. In fact, there exists a (strictly) positive random variable $W_{\beta,\infty}$ with $\E[W_{\beta,\infty}] = 1$ and such that $W_{\beta,n}\to W_{\beta,\infty}$ almost surely and in $L^1$ as $n\to\infty$.
\end{fact}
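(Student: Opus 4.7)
The plan is to follow Lyons' spine/size-biasing argument \cite{Lyons97}, which treats the full subcritical regime $\beta<\beta_c$ in a unified way. Since $\E[\sum_{i=0}^{d-1} e^{\beta Y_i}]=e^{\varphi(\beta)}$, the branching property immediately shows $(W_{\beta,n})_{n\ge 0}$ is a nonnegative $(\F_n)$-martingale of mean $1$, so there is an a.s.\ limit $W_{\beta,\infty}\ge 0$ with $\E[W_{\beta,\infty}]\le 1$ by Fatou. The work is to upgrade this to $\E[W_{\beta,\infty}]=1$ (equivalently, uniform integrability) and to $W_{\beta,\infty}>0$ a.s.

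Define the tilted probability $\tilde\PP$ on $\F_\infty$ consistently by $\tilde\PP|_{\F_n}=W_{\beta,n}\cdot\PP|_{\F_n}$; consistency follows from the martingale property, existence from Kolmogorov extension. A standard Durrett--Liggett--Lyons dichotomy asserts that $\E[W_{\beta,\infty}]=1$ iff $W_{\beta,\infty}<\infty$ $\tilde\PP$-a.s., the only alternative being $\E[W_{\beta,\infty}]=0$ with $W_{\beta,\infty}=\infty$ $\tilde\PP$-a.s. Under $\tilde\PP$ one builds a spine $(\xi_n)_{n\ge 0}$: inductively, given $\Y^{\xi_{n-1}}$, choose $\xi_n$ among the children of $\xi_{n-1}$ with probability proportional to $e^{\beta Y_{\xi_n}}$. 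A direct computation shows that along the spine the displacements are exponentially tilted, with mean $\tilde\E[Y_{\xi_k}]=\varphi'(\beta)$, and that the subtrees hanging off the spine are independent copies of the original branching random walk under $\PP$.

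Conditioning on the $\sigma$-algebra $\G$ generated by the spine together with all siblings' displacements, the spine decomposition yields
\begin{align*}
\tilde\E[W_{\beta,n}\mid\G] \;=\; e^{\beta X_{\xi_n}-\varphi(\beta)n}\;+\;\sum_{k=1}^{n}\sum_{j\ne\xi_k} e^{\beta X_{\xi_{k-1}j}-\varphi(\beta)k},
\end{align*}
using $\E[W^u_{\beta,m}]=1$ on the off-spine subtrees. Under $\tilde\PP$ the SLLN gives $X_{\xi_n}/n\to\varphi'(\beta)$, so the hypothesis $\beta<\beta_c$, i.e.\ $\beta\varphi'(\beta)<\varphi(\beta)$, forces the spine term to $0$ exponentially; a Borel--Cantelli argument (using any small fractional moment of $\sum_j e^{\beta Y_j}$, available because $\beta\in\mathcal D(\varphi)^\circ$) then shows the off-spine series converges $\tilde\PP$-a.s. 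Fatou gives $\tilde\E[W_{\beta,\infty}\mid\G]<\infty$, so $W_{\beta,\infty}<\infty$ $\tilde\PP$-a.s., and the dichotomy delivers $\E[W_{\beta,\infty}]=1$. Strict positivity follows from the branching recursion $W_{\beta,\infty}=\sum_{|u|=1} e^{\beta Y_u-\varphi(\beta)}W^u_{\beta,\infty}$: since $e^{\beta Y_u}>0$ a.s.\ and the $W^u_{\beta,\infty}$ are iid, the extinction probability $q=\PP(W_{\beta,\infty}=0)$ satisfies $q=q^d$, so $q\in\{0,1\}$, and mean $1$ rules out $q=1$.

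The main obstacle is establishing the spine decomposition identity together with the $\tilde\PP$-a.s.\ summability of the off-spine series; both are standard but require careful bookkeeping of the change of measure. Everything else reduces to the SLLN on the spine and the defining inequality $\beta\varphi'(\beta)<\varphi(\beta)$ of the subcritical phase.
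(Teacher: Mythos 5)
The paper does not actually prove Fact~\ref{fact:W}: it is quoted from the literature (Kahane--Peyri\`ere, Biggins, Lyons), so there is no internal argument to compare against. Your proposal is essentially a correct reconstruction of the Lyons (1997) spine/size-biasing proof, i.e.\ of one of the very sources the paper cites, and the overall structure (change of measure, spine decomposition, SLLN along the spine, the zero--one argument $q=q^d$ for strict positivity) is sound. Two places deserve more care if this were written out. First, conditional Fatou only gives $\tilde\E[\liminf_n W_{\beta,n}\mid\mathscr G]<\infty$, hence $\liminf_n W_{\beta,n}<\infty$ $\tilde\PP$-a.s., whereas the measure-theoretic dichotomy ($1=\E[W_{\beta,\infty}]+\tilde\PP(\limsup_n W_{\beta,n}=\infty)$) requires control of the $\limsup$; the standard fix is to note that, since $W_{\beta,n}>0$ $\PP$-a.s.\ here, $1/W_{\beta,n}$ is a nonnegative $\tilde\PP$-(super)martingale, so $W_{\beta,n}$ converges $\tilde\PP$-a.s.\ in $(0,\infty]$ and $\liminf=\limsup$. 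Second, your ``small fractional moment'' step must be a moment of the sibling sum under the \emph{size-biased} law: the Borel--Cantelli bound needs $\E[(\sum_i e^{\beta Y_i})^{1+s}]<\infty$ for some $s>0$, which indeed holds because $\beta\in\mathcal D(\varphi)^\circ$ gives $p\beta\in\mathcal D(\varphi)$ for some $p>1$; this is exactly where the paper's Assumption~\ref{assump:phi} lets you bypass Biggins' sharp $X\log X$ condition, and it is worth saying so explicitly (likewise, a one-line remark that $\beta\in[0,\beta_c)$ implies $\beta\in\mathcal D(\varphi)^\circ$ and $\beta\varphi'(\beta)<\varphi(\beta)$, by monotonicity of $\varphi(\beta)-\beta\varphi'(\beta)$ and $\varphi(0)=\log d>0$, would close the loop). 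With these two points made precise, your argument proves the Fact as stated in the paper's setting.
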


\paragraph{Algorithms.}
We define an algorithmic model similar to the ones in \cite{PemantleSearch09,AB&M20}.
Let $N\in\N$. A random sequence $\mathrm{v}=(v(k))_{k\geq 0}$ taking values in $\mathbb{T}_N$ is called a \emph{(randomized) algorithm} if $v(0) = \varnothing$ and $v(k+1)$ is $\tilde{\F}_k$-measurable for every $k\geq 0$. Here, 
\[
\tilde{\F}_k = \sigma\left(v(1),\ldots,v(k);\,X_{v(1)},\ldots,X_{v(k)};\,U_1,\ldots,U_{k+1}\right)
\]
where $(U_k)_{k\geq 1}$ is a sequence of iid uniform random variables on $[0,1]$, independent of the branching random walk $\mathbf{X}$. Roughly speaking, the filtration $\tilde{\F} = (\tilde{\F}_k)_{k\geq 0}$ contains all information about everything we have queried so far, as well as the additional randomness needed to choose the next vertex. We further suppose that there exists a stopping time $\tau$ with respect to the filtration $\tilde{\F}$ and such that $v(\tau) \in \partial \T_N$. We call $\tau$ the \emph{running time} and $v(\tau)$ the \emph{output} of the algorithm. The \emph{law of the output} is the (random) distribution of $v(\tau)$, conditioned on the branching random walk.

Often, we will consider a family of algorithms indexed by $N$, which we also call an algorithm by abuse of notation.
We say that the algorithm is a $\emph{polynomial-time}$ algorithm if there exists a (deterministic) polynomial $P(N)$ such that $\tau \le P(N)$ almost surely.

Fix $M\in\N$. Given a configuration of the branching random walk of depth $N$,
consider the following algorithm (See Figure \ref{fig:algor}):

\begin{center}
  \begin{algorithm}[H]
    \label{algor}
    set $v=\varnothing$\;
    \While{$\abs{v}<N$} {
      choose $w$ with $\abs{w}=M\wedge (N-\abs{v})$ according to the Gibbs measure $\mu^v_{\beta,M\wedge (N-\abs{v})}$\;
      replace $v$ with $vw$\;
    }
    output $v$
    \caption{Recursive sampling on $M$-renormalized tree}
  \end{algorithm}
\end{center}

\begin{figure}[ht]
  \label{fig:algor}
  \centering
  \includegraphics[width=0.8\textwidth]{./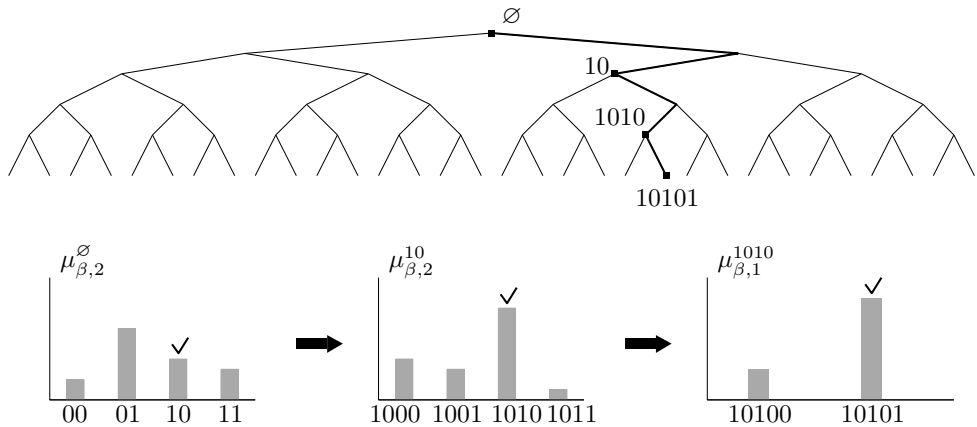}
  \caption{Schematic illustration of Algorithm \ref{algor} with $N=5$ and $M=2$ on a given configuration of the binary branching random walk. The squares are the vertices that the algorithm chooses at each step, and the thick line is the path from the root to the sampled vertex. The bar charts represent the Gibbs measures from which the algorithm samples at each step, and the check mark indicates which vertex is chosen. Note that the histograms are not drawn to scale.}
\end{figure}

\begin{remark}
\label{rem:algo}
It is easy to see that Algorithm~\ref{algor} can be formally written as a randomized algorithm according to the above algorithmic model. Furthermore, its running time is deterministic and bounded by $\lceil N/M \rceil 2^M$. The law of its output is a random probability measure $\mu_{\beta,M,N}$ on $\partial \T_N$ that can be recursively defined as follows:

  \begin{equation}
  \begin{split}
    \mu_{\beta,M,0}(\varnothing) &= 1 \\
    \mu_{\beta,M,N\wedge (K+1)M}(vw) &= \mu_{\beta,M,KM}(v)\cdot \mu^v_{\beta,M\wedge(N-KM)}(w) 
  \end{split}
  \label{eq:mu_decomposition}
  \end{equation}
  for all $\abs{v}=KM$, $\abs{w}=M\wedge(N-KM)$ and $0\leq K\leq \floor*{\frac{N}{M}}$.
\end{remark}

\paragraph{Approximation and threshold.}
Given two probability measures $P$ and $Q$ defined on a discrete space $\Omega$, the entropy of $Q$ and the Kullback--Leibler divergence from $Q$ to $P$ are respectively defined by 
\begin{align} 
\label{def:H}
H(Q) &= \sum_{\omega\in \Omega} Q(\omega)\cdot \log\left(\frac 1 {Q(\omega)}\right)\\
\label{def:KL}
\kld{P}{Q} &= \sum_{\omega\in \Omega} P(\omega)\cdot\log\left(\frac{P(\omega)}{Q(\omega)}\right).
\end{align}

Note that by Jensen's inequality, the entropy and the Kullback--Leibler divergence are non-negative. In what follows, we will often take $P$ and/or $Q$ to be a Gibbs measure $\mu_{\beta,N}$ for some $N$. In that case, we set $\Omega=\partial \T_n$, where $n$ is the largest number such that both $P$ and $Q$ are defined on $\partial \T_n$.

The following lemma is folklore. For completeness, we provide a proof in Section~\ref{sec:proof_lemma}.
\begin{lemma}
\label{lem:entropy}
\ 
\begin{enumerate}
\item If $\beta \in[0,\beta_c)$, then $H(\mu_{\beta,N})/N$ converges in probability to a positive constant as $N\to\infty$.
\item If $\beta > \beta_c$ and if $\beta_c\in\mathcal D(\varphi)^\circ$, then $H(\mu_{\beta,N}) = O(1)$ in probability, as $N\to\infty$. In other words, the sequence of random variables $(H(\mu_{\beta,N}))_{N\ge1}$ is tight.
\end{enumerate}
\end{lemma}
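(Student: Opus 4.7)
The plan starts from the master identity
\begin{equation*}
H(\mu_{\beta,N}) = \log W_{\beta,N} + \varphi(\beta) N - \beta\, \mathcal{E}_{\beta,N},
\end{equation*}
obtained by plugging \eqref{def:Gibbs0} into \eqref{def:H}, where $\mathcal{E}_{\beta,N} := \sum_{|v|=N}\mu_{\beta,N}(v)\, X_v$ is the Gibbs average of the energy. Both parts of the lemma will be read off from this formula.

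For Part 1 ($\beta < \beta_c$), I would divide by $N$ and handle the three terms separately. Fact~\ref{fact:W} yields $(\log W_{\beta,N})/N \to 0$ in probability at once, so the heart of the proof is to show $\mathcal{E}_{\beta,N}/N \to \varphi'(\beta)$ in probability. The natural tool is the many-to-one formula in its tilted form: reweighting by $e^{\beta X_v - \varphi(\beta) N}$ replaces the trajectory of a uniformly chosen leaf by that of a random walk whose step has the $\beta$-tilted law, with mean $\varphi'(\beta)$ and finite variance $\varphi''(\beta)$ (finite because $\beta \in \mathcal{D}(\varphi)^\circ$). This gives
\begin{equation*}
\EX{\sum_{|v|=N}\left(\frac{X_v}{N}-\varphi'(\beta)\right)^{2} e^{\beta X_v - \varphi(\beta) N}} = \frac{\varphi''(\beta)}{N},
\end{equation*}
and Jensen's inequality applied inside the $\mu_{\beta,N}$-average yields
\begin{equation*}
\EX{W_{\beta,N}\left(\frac{\mathcal{E}_{\beta,N}}{N}-\varphi'(\beta)\right)^{2}} \le \frac{\varphi''(\beta)}{N} \xrightarrow[N\to\infty]{} 0.
\end{equation*}
Since $W_{\beta,N} \to W_{\beta,\infty} \in (0,\infty)$ almost surely by Fact~\ref{fact:W}, this upgrades to $\mathcal{E}_{\beta,N}/N \to \varphi'(\beta)$ in probability, hence $H(\mu_{\beta,N})/N \to \varphi(\beta) - \beta\varphi'(\beta)$, strictly positive for $\beta<\beta_c$ by the very definition of $\beta_c$.

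For Part 2 ($\beta > \beta_c$ and $\beta_c \in \mathcal{D}(\varphi)^\circ$), I would rewrite the master identity, with $X_N^* := \max_{|v|=N} X_v$, as
\begin{equation*}
H(\mu_{\beta,N}) = \log Y_N + \beta R_N, \qquad Y_N := \sum_{|v|=N} e^{\beta(X_v - X_N^*)},\quad R_N := X_N^* - \mathcal{E}_{\beta,N},
\end{equation*}
noting $Y_N \ge 1$ and $R_N \ge 0$. The task therefore reduces to tightness of $(Y_N)$ and $(R_N)$. Here I would invoke the extremal theory of the branching random walk (Aidekon, Madaule, and references therein): under the present hypotheses, the point process $\sum_{|v|=N}\delta_{X_v - m_N}$ converges in distribution (with $m_N$ a deterministic centering of the form $v^{*} N - \frac{3}{2\beta_c}\log N$) to a decorated Poisson point process of intensity proportional to $e^{-\beta_c x}\,dx$. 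Because $\beta > \beta_c$, the exponentially-weighted partition function of the limit and its Gibbs-average gap to the maximum are almost surely finite; a continuous-mapping argument then transfers tightness back to $Y_N$ and $R_N$, and hence to $H(\mu_{\beta,N})$.

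The main obstacle I anticipate is in Part~2: $Y_N$ and $R_N$ are unbounded functionals of the extremal process, so making the continuous-mapping step rigorous demands an auxiliary truncation—restricting to leaves with $X_v \ge X_N^* - L$ and bounding the tail contribution uniformly in $N$ by a first-moment or Paley--Zygmund estimate, before letting $L \to \infty$. Part~1, by contrast, is essentially routine given the master identity and many-to-one formula; the only mildly delicate step is the passage from the $W_{\beta,N}$-weighted $L^{2}$ estimate to convergence in probability of $\mathcal{E}_{\beta,N}/N$, which leans on the a.s.\ positivity of $W_{\beta,\infty}$.
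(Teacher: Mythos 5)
Your Part 1 is correct and rests on the same master identity as the paper, $H(\mu_{\beta,N}) = \log W_{\beta,N} + \varphi(\beta)N - \beta\sum_{|v|=N}\mu_{\beta,N}(v)X_v$; the only difference is how the energy term is controlled. The paper observes that $\sum_{|v|=N}\mu_{\beta,N}(v)X_v - \varphi'(\beta)N = D_{\beta,N}/W_{\beta,N}$ with $D_{\beta,N} = \dv{}{\beta}W_{\beta,N}$ and simply cites Biggins' uniform convergence theorem to get that this correction is $O(1)$ almost surely, whereas you obtain the weaker but sufficient $o(N)$ bound by a self-contained many-to-one second-moment computation (note that $\varphi''(\beta)<\infty$ since $[0,\beta_c)\subset\mathcal D(\varphi)^\circ$) combined with Jensen and the a.s. positivity of $W_{\beta,\infty}$. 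Both routes give the limit $\varphi(\beta)-\beta\varphi'(\beta)>0$; yours trades a citation for a short explicit estimate, which is a perfectly good exchange.

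In Part 2 your reduction $H(\mu_{\beta,N}) = \log Y_N + \beta R_N$ with $Y_N\ge 1$, $R_N\ge0$ is correct, but the completion you sketch is where the real work sits, and the specific tool you name is too weak as stated: a plain first-moment bound on the number of leaves within distance $y$ of the front carries an extra polynomial factor in $N$ (because of the $\frac{3}{2\beta_c}\log N$ centering), so truncating at level $X_N^*-L$ and estimating the tail "by a first-moment estimate" does not give a bound uniform in $N$ without an additional barrier argument; moreover $X_N^*$ is random and correlated with the sum, which adds another layer to the continuous-mapping step you flag. The paper avoids all of this: it fixes $\beta'\in(\beta_c,\beta)$, uses the elementary inequality $xe^{-\beta x}\le Ce^{-\beta' x}$, and bounds $H(\mu_{\beta,N}) \le C\,\widetilde W_{\beta',N}/\widetilde W_{\beta,N} + \log \widetilde W_{\beta,N}$, where $\widetilde W_{\beta,N}$ is the partition function recentered by the deterministic $\varphi'(\beta_c)N$ plus logarithmic correction; tightness (indeed convergence to positive limits) of $\widetilde W_{\beta,N}$ and $\widetilde W_{\beta',N}$ is exactly Madaule's theorem, the same reference underlying the extremal-process statement you invoke. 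Your route can be repaired in the same spirit without any point-process limit: tightness of $Y_N$ follows from tightness of $\widetilde W_{\beta,N}$ together with tightness of $X_N^*-m_N$, and $R_N \le C\sum_{|v|=N}e^{-\beta'(X_N^*-X_v)}$ by the same inequality, which is the $\beta'$-analogue of $Y_N$. So the idea is sound, but as written the decisive tightness transfer is a genuine gap that is most economically closed by citing the recentered partition-function convergence directly rather than the decorated Poisson point process plus truncation.
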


We are interested in the  \emph{algorithmically efficient approximation} of the Gibbs measure $\mu_{\beta,N}$. The notion of approximation we will use is the following.
\begin{definition}
\label{def:approximation}
Let $\beta \ge0$. We say that a sequence of random probability measures $(\tilde \mu_{\beta,N})_{N\ge1}$ \emph{approximates} the Gibbs measure $\mu_{\beta,N}$ if
\begin{equation}
\frac{\kld{\tilde \mu_{\beta,N}}{\mu_{\beta,N}}}{H(\mu_{\beta,N})} \to 0\;\; \text{in probability as $N\to\infty$}.
\end{equation}
\end{definition}

\begin{remark}
  More generally, assume that we are given two sequences of probability measures $(P_N)_{N\geq 1}$ and $(Q_N)_{N\geq 1}$ satisfying $H(Q_N)/N \to C\in(0,\infty)$ and 
\begin{align}
\label{eq:measure equiv}
\frac{1}{N}\kld{P_N}{Q_N}\rightarrow 0 \quad \text{as} \quad N\rightarrow\infty.
\end{align}
This has been called \emph{measure equivalence} or \emph{equivalence in the sense of specific relative entropy} in the physics literature \cite{TouchetteEquivalence}. 
Mathematically, Equation~\eqref{eq:measure equiv} implies the following: if $(A_N)_{N\geq 1}$ is a sequence of sets such that $Q_N(A_N)$ convergences to $0$ exponentially fast as $N\rightarrow\infty$, then we also have $P_N(A_N)\to 0$. Indeed, this is an easy consequence of Birg\'{e}'s inequality (see e.g. Theorem~4.20~in~\cite{ConIneq13}).

\end{remark}

\paragraph{Main results.}

We now state the main theorem of this paper.

\begin{theorem}[Approximation bounds]
  \label{thm:main}Let $N\in\N$, $M\in\llbracket 1,N \rrbracket$, and $\beta\in [0,\beta_c)$. Then for all $p\geq1$, there exists a constant $C_1(p) > 0$ such that
  \begin{align}
  \norm{\kld{\mu_{\beta,M,N}}{\mu_{\beta,N}}}_p \leq C_1(p)\cdot \floor*{\frac{N}{M}}.\label{eq:main.1}
  \end{align}
  Moreover, for all $p\geq1$, there exists a constant $C_1(p)>0$ such that 
  \begin{align}
  \norm{\kld{\mu_{\beta,M,N}}{\mu_{\beta,N}}-\EX{\kld{\mu_{\beta,M,N}}{\mu_{\beta,N}}}}_p \leq C_1(p). \label{eq:main.2}
  \end{align}
\end{theorem}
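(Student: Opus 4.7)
The plan is to apply the chain rule for KL divergence to the recursive structure of $\mu_{\beta,M,N}$ from \eqref{eq:mu_decomposition} and then telescope. For each level $kM$ and each vertex $v$ with $|v|=kM$, the chain rule produces a local divergence
\[
\Gamma^v_k = \log W^v_{\beta,N-kM} - \log W^v_{\beta,M} - \sum_{|w|=M}\mu^v_{\beta,M}(w)\log W^{vw}_{\beta,N-(k+1)M},
\]
non-negative by Jensen's inequality applied to \eqref{eq:W decomp}. Weighting by $\mu_{\beta,M,N}(v)$ and summing over $v$ and $k$, the $\log W^v_{\beta,N-kM}$ terms telescope thanks to $\mu_{\beta,M,N}(vw)=\mu_{\beta,M,N}(v)\mu^v_{\beta,M}(w)$; assuming $N=KM$ for simplicity, one obtains the clean identity
\[
\kld{\mu_{\beta,M,N}}{\mu_{\beta,N}} = \log W_{\beta,N} - \sum_{k=0}^{K-1}B_k, \qquad B_k := \sum_{|v|=kM}\mu_{\beta,M,N}(v)\log W^v_{\beta,M}
\]
(the case $N\not\equiv 0\pmod M$ gives an extra boundary term of the same kind).

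The key analytic input is the uniform bound $\sup_n\|\log W_{\beta,n}\|_p<\infty$ for every $\beta<\beta_c$ and every $p\geq 1$, which combines $L^p$-boundedness of the Biggins martingale (upper tail) with standard lower-tail estimates on $W_\infty$ (lower tail). Since $\mu_{\beta,M,N}(v)$ is $\F_{kM}$-measurable while $\log W^v_{\beta,M}$ depends only on the subtree rooted at $v$ and is independent of $\F_{kM}$ with the law of $\log W_{\beta,M}$, a conditional Jensen gives $\E[|B_k|^p]\leq\E[|\log W_{\beta,M}|^p]$, and the triangle inequality yields \eqref{eq:main.1} at once.

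For \eqref{eq:main.2}, the same conditional independence identifies $\eta_k := B_k - \E[\log W_{\beta,M}]$ as a martingale-difference sequence for $(\F_{kM})_{k\geq 0}$, with conditional variance $Q_k\,\mathrm{Var}(\log W_{\beta,M})$ where $Q_k := \sum_{|v|=kM}\mu_{\beta,M,N}(v)^2$. A recursion on the block-product form of $\mu_{\beta,M,N}$, combined with the independence of subtrees, yields $\E[Q_k] = \rho(M)^k$ with $\rho(M) := \E[\sum_{|v|=M}\mu_{\beta,M}(v)^2]$. A Rosenthal-type inequality for weighted sums of iid centered variables (using $\sum_v\mu(v)^p \leq \sum_v\mu(v)^2$ for $p\geq 2$, since $\mu(v)\in[0,1]$) yields $\|\eta_k\|_p \leq C_p \rho(M)^{k/p}$, whence $\|\sum_k \eta_k\|_p = O(1/(1-\rho(M)^{1/p}))$ by the triangle inequality. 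Combined with $\|\log W_{\beta,N} - \E\log W_{\beta,N}\|_p = O(1)$ coming from $L^p$-convergence of $\log W_{\beta,n}$, this gives \eqref{eq:main.2}. The main technical obstacle is to establish $\sup_M \rho(M)<1$: in the regime $\varphi(2\beta)<2\varphi(\beta)$ it is immediate by a second-moment calculation, which even gives exponential decay in $M$; for $\beta$ closer to $\beta_c$ one must invoke the structural fact that in the subcritical phase the Gibbs measure spreads over exponentially many configurations, so the overlap $\PP(V=V')$ of two iid samples stays uniformly bounded away from $1$.
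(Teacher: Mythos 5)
Your overall architecture is essentially the paper's: the telescoped identity you write is exactly what results from combining the paper's block decomposition of the divergence (Theorem~\ref{thm:KL decomp}) with the expression of each block divergence through log-partition functions (Proposition~\ref{prop:KL decomp}); your key analytic input $\sup_n\norm{\log W_{\beta,n}}_p<\infty$ is Lemma~\ref{lem:log W}; and your Rosenthal-plus-$\rho(M)^k$ scheme is the paper's proof of \eqref{eq:main.2}. One point of precision: do not appeal to ``$L^p$-boundedness of the Biggins martingale'' --- for $\beta$ close to $\beta_c$ the martingale $W_{\beta,n}$ is \emph{not} bounded in $L^p$ for large $p$. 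What is actually needed (and what the paper uses) is only $L^1$-boundedness for the positive part of $\log W_{\beta,n}$, together with uniform-in-$n$ negative moments $\sup_n\E[W_{\beta,n}^{-s}]<\infty$ for some $s>0$ (Liu's theorem, under Assumption~\ref{assump:phi}); a tail estimate for the limit $W_{\beta,\infty}$ alone is not, as stated, enough.

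The genuine gap is the step you yourself flag as the main obstacle: the uniform bound $\sup_M \rho(M)\le r<1$, with $\rho(M)=\E\bigl[\sum_{\abs{v}=M}\mu_{\beta,M}(v)^2\bigr]$, valid on the \emph{whole} phase $\beta\in[0,\beta_c)$ and not only when $\varphi(2\beta)<2\varphi(\beta)$. Invoking the ``structural fact'' that the Gibbs measure spreads over exponentially many configurations is circular: $\rho(M)$ \emph{is} the annealed two-replica coincidence probability, so this is precisely the statement to be proved, and without uniformity in $M$ your geometric bound $\sum_k\rho(M)^{k/p}$ could degenerate as $M$ varies, while the constant in \eqref{eq:main.2} must be independent of $M$ and $N$. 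The paper closes this with a short argument you could adopt: choose $p_0\in(1,2]$ with $\varphi(p_0\beta)<p_0\varphi(\beta)$ (possible since $\beta<\beta_c$), bound $\sum_v\mu_{\beta,M}(v)^2\le\sum_v\mu_{\beta,M}(v)^{p_0}$, and split on the event $\{W_{\beta,M}<1/2\}$: on that event the sum is at most $1$ and $\PP(W_{\beta,M}<1/2)\le a<1$ for $M$ large, because $W_{\beta,M}\to W_{\beta,\infty}$ a.s.\ with $\E[W_{\beta,\infty}]=1$ (Fact~\ref{fact:W}); on the complement, $\sum_v\mu_{\beta,M}(v)^{p_0}\le 2^{p_0}\sum_v e^{p_0\beta X_v-p_0\varphi(\beta)M}$, whose expectation is $e^{(\varphi(p_0\beta)-p_0\varphi(\beta))M}\to0$. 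This yields $\limsup_{M\to\infty}\rho(M)\le a<1$, hence a uniform $r<1$. With that lemma supplied, the rest of your argument goes through and coincides with the paper's proof.
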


By Theorem \ref{thm:main}, we can derive the following corollary which states the existence of an algorithm that approximates the Gibbs measure $\mu_{\beta,N}$ efficiently.

\begin{corollary}[Complexity upper bound] \label{cor:main}
If $\beta\in [0,\beta_c)$, then 
there exists a polynomial-time algorithm such that for every $p>1$, denoting by $\tilde{\mu}_{\beta,N}$ the law of its output,
\begin{align}
\label{eq:corollary}
  \frac{1}{N}\norm{\kld{\tilde{\mu}_{\beta,N}}{\mu_{\beta,N}}}_p \rightarrow 0
\end{align}
as $N\rightarrow\infty$. In particular, $\tilde \mu_{\beta,N}$ approximates the Gibbs measure $\mu_{\beta,N}$ in the sense of Definition~\ref{def:approximation}.
\end{corollary}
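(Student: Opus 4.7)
The plan is to take $\tilde\mu_{\beta,N} = \mu_{\beta,M(N),N}$, the law produced by Algorithm~\ref{algor}, with a block size $M(N)$ chosen so that two conflicting requirements are both met: the running time bound of Remark~\ref{rem:algo}, which scales like $\lceil N/M\rceil \cdot d^M$, must remain polynomial in $N$; and the upper bound \eqref{eq:main.1} of Theorem~\ref{thm:main}, which controls $\kld{\mu_{\beta,M,N}}{\mu_{\beta,N}}$ by a multiple of $\lfloor N/M\rfloor$, must be $o(N)$ after division by $N$. Both constraints are satisfied simultaneously by letting $M$ grow slowly enough with $N$.

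Concretely, I would set $M(N) = \lfloor c \log N\rfloor$ with $c>0$ chosen so that $c\log d < 1$ (for instance $c = 1/(2\log d)$). Then the deterministic running time is at most $\lceil N/M(N)\rceil \cdot d^{M(N)} \le N^{1+c\log d}$, which is polynomial in $N$, so the resulting family is a polynomial-time algorithm in the sense defined above. On the other hand, $M(N)\to\infty$.

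For the $L^p$ bound, I would apply \eqref{eq:main.1} with $M = M(N)$, giving
\begin{equation*}
\frac{1}{N}\norm{\kld{\mu_{\beta,M(N),N}}{\mu_{\beta,N}}}_p \;\le\; \frac{C_1(p)}{N}\left\lfloor\frac{N}{M(N)}\right\rfloor \;\le\; \frac{C_1(p)}{M(N)} \;\xrightarrow[N\to\infty]{}\; 0
\end{equation*}
for every $p\ge 1$, which is exactly \eqref{eq:corollary}.

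For the ``in particular'' clause, I would combine this with Lemma~\ref{lem:entropy}(1): taking $p=1$ and applying Markov's inequality, $\kld{\tilde\mu_{\beta,N}}{\mu_{\beta,N}}/N$ tends to $0$ in probability, while $H(\mu_{\beta,N})/N$ converges in probability to a strictly positive constant. The ratio of the two then tends to $0$ in probability by Slutsky's theorem, matching Definition~\ref{def:approximation}. There is essentially no obstacle here: the entire corollary is a bookkeeping consequence of Theorem~\ref{thm:main} together with a logarithmic choice of $M$, the only mild point being to verify that the two $L^p$ versus running-time requirements are compatible, which is where $M(N) = \Theta(\log N)$ enters.
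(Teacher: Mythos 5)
Your proposal is correct and follows essentially the same route as the paper: take $\tilde\mu_{\beta,N}=\mu_{\beta,M(N),N}$ with $M(N)\to\infty$, $M(N)=O(\log N)$ so that Remark~\ref{rem:algo} gives a polynomial running time, deduce \eqref{eq:corollary} from \eqref{eq:main.1}, and obtain the approximation statement from Lemma~\ref{lem:entropy}(1). The only difference is that you spell out the Markov/Slutsky step and the (unneeded but harmless) condition $c\log d<1$, which the paper leaves implicit.
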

\begin{proof}
Let $M= M(N)$ be a sequence that goes to infinity as $N\rightarrow\infty$, and set $\tilde{\mu}_{\beta,N} = \mu_{\beta,M,N}$. Equation \eqref{eq:corollary} then follows from \eqref{eq:main.1} in Theorem~\ref{thm:main}. Assuming moreover that $M = O(\log N)$, Remark~\ref{rem:algo} implies  that $\tilde{\mu}_{\beta,N}$ can be computed by a polynomial-time algorithm. The second statement follows from the first part of Lemma~\ref{lem:entropy}.
\end{proof}

Finally, we also provide a hardness result, assuming a mild regularity condition.

\begin{theorem}[Complexity lower bound]
\label{th:hardness}
Assume $\beta_c\in \mathcal D(\varphi)^\circ$ (in particular, $\beta_c < \infty$).
Let $\beta > \beta_c$. Let $v = (v(k))_{k\ge0}$ be an algorithm which outputs a vertex of law $\tilde \mu_N$ such that $\tilde \mu_N$ approximates the Gibbs measure $\mu_{\beta,N}$ in the sense of Definition~\ref{def:approximation}. Let $\tau$ be the running time of the algorithm. Then for every $\delta>0$, there exists $z>0$, such that for large enough $N$,
\[
\mathbb P\left(\tau \ge e^{z\sqrt N}\right) \ge 1-\delta.
\]
\end{theorem}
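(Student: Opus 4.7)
I would argue by contrapositive: if $\PP(\tau<e^{z\sqrt N})>\delta$ for every $z>0$, then the output $\tilde\mu_N$ cannot approximate $\mu_{\beta,N}$ in the sense of Definition~\ref{def:approximation}. The plan rests on two pillars: in the supercritical regime the Gibbs measure concentrates on a bounded random number of near-maximal leaves, and locating any such leaf requires stretched-exponential time.

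\emph{Gibbs concentration and propagation to the output.} By Lemma~\ref{lem:entropy}(2), $H(\mu_{\beta,N})$ is tight; standard results on the Gibbs measure of BRW at $\beta>\beta_c$ (freezing / Derrida--Ruelle cascade) moreover show that it converges in law to a strictly positive random variable. Given $\delta>0$, fix $0<\eta<C<\infty$ so that $E_N=\{\eta\le H(\mu_{\beta,N})\le C\}$ has $\PP(E_N)\ge 1-\delta/5$. A Markov bound applied to $-\log\mu_{\beta,N}(V)$ with $V\sim\mu_{\beta,N}$ produces, on $E_N$, a random set $\mathcal S_N\subset\partial\T_N$ with $|\mathcal S_N|\le L:=\lceil e^{2C/\delta}\rceil$, $\mu_{\beta,N}(\mathcal S_N)\ge 1-\delta/2$, and every $v\in\mathcal S_N$ of weight at least $1/L$; comparing the Boltzmann factors in \eqref{def:Gibbs0} then forces $X_v\ge\max_{|w|=N}X_w-K$ with $K=\beta^{-1}\log L$. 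On $\{H(\mu_{\beta,N})\ge\eta\}$, the hypothesis $\kld{\tilde\mu_N}{\mu_{\beta,N}}/H(\mu_{\beta,N})\to0$ in probability reduces to $\kld{\tilde\mu_N}{\mu_{\beta,N}}\to0$ in probability, and Pinsker's inequality yields $\|\tilde\mu_N-\mu_{\beta,N}\|_{\mathrm{TV}}\to0$ in probability. Consequently, on an event of probability at least $1-2\delta/5$ the conditional law of $v(\tau)$ satisfies $\tilde\mu_N(\mathcal S_N)\ge 1-\delta/5$, so $X_{v(\tau)}\ge\max_{|w|=N}X_w-K$ with overall probability at least $1-\delta/2$.

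\emph{Hardness of locating a near-maximum (the crux).} Fix $k=\lfloor a\sqrt N\rfloor$ for a small $a>0$. Every leaf within $K$ of the maximum necessarily has an ancestor $u\in\T_k$ with $X_u\ge v_c k-b\log N$, where $v_c=\varphi'(\beta_c)$: otherwise, using the well-known asymptotics $m_{N-k}=v_c(N-k)-\tfrac{3}{2\beta_c}\log(N-k)+O(1)$ for the maximum of an independent BRW of depth $N-k$, the subtree rooted at $u$ produces a near-maximum only with negligible probability. A many-to-one computation combined with the local CLT applied after exponential tilting by $\beta_c$ — for which the assumption $\beta_c\in\mathcal D(\varphi)^\circ$ is precisely what is needed — gives $\PP(X_u\ge v_c k-b\log N)\le N^{\beta_c b}/(d^k\sqrt k)$ for each individual $u\in\T_k$, so that the density of ``good'' ancestors in $\T_k$ is at most $e^{-c\sqrt N}$ for some $c=c(a)>0$. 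A conditional first-moment bound along the algorithm's filtration $\tilde\F$, invoking the branching property so that values in not-yet-explored subtrees remain distributed as fresh independent BRWs given the past, then yields that the probability the algorithm enters a good subtree of $\T_k$ within $T$ queries is at most $T\cdot e^{-c\sqrt N}$. For $T=e^{z\sqrt N}$ with $z<c$ this is less than $\delta/2$; combined with the preceding paragraph, the output cannot lie in $\mathcal S_N$ on an event of probability at least $1-\delta$, and we conclude $\PP(\tau\ge e^{z\sqrt N})\ge 1-\delta$.

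\emph{Main obstacle.} The delicate step is the adaptive conditional first-moment bound in the last paragraph. The algorithm's queries are history-dependent and the set of good ancestors is itself random and depends on unobserved BRW values, so the naive union bound must be replaced by a step-by-step argument along $\tilde\F$. The right formulation is: at each query, condition on the past and use the branching property to reduce the conditional distribution of $X_u$ at any not-yet-explored $u\in\T_k$ to $x$ plus an independent random walk of length $k-k'$, where $k'$ is the level of the nearest already-queried ancestor and $x$ its value; one then verifies that $\PP(X_u\ge v_c k-b\log N\mid X_{u'}=x)\le e^{-c'\sqrt N}$ uniformly in the shift $x$, which is again a local CLT estimate at the critical tilt. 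Summing this conditional bound over the at most $T$ queries closes the estimate.
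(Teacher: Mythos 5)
Your first pillar (tightness of $H(\mu_{\beta,N})$, Pinsker to transfer properties of $\mu_{\beta,N}$ to the output law, and concentration of $\mu_{\beta,N}$ on $O(1)$ many leaves within a constant of the maximum) is sound and parallels the first step of the paper's argument (only the upper bound on $H$ is needed there, not the lower bound $\eta$). The gap is in your crux step. The claim that every leaf within $K$ of the maximum \emph{necessarily} has an ancestor $u$ at level $k=\lfloor a\sqrt N\rfloor$ with $X_u\ge v_c k-b\log N$ is false: the ancestral path of a near-maximal (or Gibbs-sampled) particle behaves like $v_c t$ minus $\sqrt N$ times a Brownian excursion --- exactly the Chen--Madaule--Mallein input used in Lemma~\ref{lem:sqrt} --- so at level $k\asymp\sqrt N$ it typically lies $\Theta(\sqrt k)=\Theta(N^{1/4})\gg\log N$ \emph{below} $v_c k$. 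Your ``otherwise negligible'' justification is a single-vertex estimate; it ignores that the expected number of level-$k$ vertices sitting $y\asymp\sqrt k$ below $v_c k$ is of order $e^{\beta_c y}$, which compensates the $e^{-\beta_c y}$ cost for their subtrees to reach the global maximum, and it is precisely these vertices that produce the near-maximal leaves. So your necessary condition fails with probability tending to one.

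The adaptive step is also not fixable as formulated. The bound you require, ``$\PP(X_u\ge v_ck-b\log N\mid X_{u'}=x)\le e^{-c'\sqrt N}$ uniformly in the shift $x$'', is false: if an already-queried ancestor $u'$ at level $k-1$ has $X_{u'}$ close to $v_c k$, the conditional probability that its child is good is of order one. This matters because your notion of ``good'' is an absolute-height property, which an algorithm can approach incrementally; indeed, even for the corrected barrier $v_ck-\Theta(\sqrt k)$, a Pemantle-type climb reaches a good level-$k$ vertex in time roughly $\exp(O(k^{1/4}))=\exp(O(N^{1/8}))$, so the task ``enter a good subtree at level $a\sqrt N$'' is genuinely easier than $e^{z\sqrt N}$ and cannot serve as the hard sub-problem. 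The paper avoids both difficulties by making the rare event \emph{relative and internal to an unexplored subtree}: by Lemma~\ref{lem:sqrt}, the output's displacement over the second half exceeds $mN/2+z\sqrt N$, i.e.\ its ancestor $w$ at level $\lfloor N/2\rfloor$ is ``$z$-exceptional'', an event depending only on increments inside $w$'s subtree, hence independent across subtrees and of probability at most $e^{-cz\sqrt{N/2}}$ by \eqref{eq:max_tail}; whatever the algorithm has learned, each newly probed level-$\lfloor N/2\rfloor$ subtree is exceptional with at most that probability, which through the enlarged filtration $\G$ yields geometric domination of $\tau$ and the $e^{z\sqrt N}$ bound. To repair your proof you would have to replace the absolute barrier at level $a\sqrt N$ by a relative-displacement event of this type, which is essentially the paper's proof.
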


\subsection{Related work} \label{sec:related work}

An early study on searching algorithms on the branching random walk can be found in Karp and Pearl~\cite{KP83}. They considered the binary branching random walk with Bernoulli increments $\text{Ber}(p)$, and they showed that for $p>1/2$ and $p=1/2$, they gave an algorithm that can find an exact maximal vertex in linear and quadratic expected time, respectively. While for $p<1/2$, 
it is possible to find an approximate maximal vertex in linear time with high probability using a depth-first search on a renormalized tree. Aldous~\cite{Aldous92} gave a different algorithm and, among other things, extended the result of Karp and Pearl~\cite{KP83} to general increment distributions. A hardness result was obtained by Pemantle~\cite{PemantleSearch09}. Among other things, he showed for the binary branching random walk with Bernoulli increments with mean $p < 1/2$ that any search algorithm which finds a vertex within a $(1-\varepsilon)$ factor of the maximum with high probability needs at least $N \times \exp(\Theta(\varepsilon^{-1/2}))$ with high probability.\footnote{To be precise, this explicit bound relied on a conjecture on branching random walk killed at a linear space-time barrier (Conjecture 1 in \cite{PemantleSearch09}), which was subsequently proven to be true \cite{BG11,GHS11}.}


As mentioned above, Addario-Berry and Maillard~\cite{AB&M20} considered this optimization problem for the continuous random energy model (CREM), which is a binary time-inhomogeneous branching random walk with Gaussian displacements, proving the existence of an threshold $x_*$ such that the following holds: a) for every $x<x_*$, there exists a polynomial-time algorithm that finds a vertex with $X_v \ge xN$ with high probability, b) for every $x>x_*$, every such algorithm has a running time which is at least exponential in $N$ with high probability. 

The CREM, introduced by Bovier and Kurkova~\cite{CREM04} based on previous work by Derrida and Spohn~\cite{DerridaSpohn}, is a toy model of a disordered system in statistical physics, i.e.~a model where the Hamiltonian -- the function that assigns energies to the states of the system -- is itself random. These systems have recently seen a lot of interest in the mathematical literature with regards to efficient algorithms for finding low-energy states. A key quantity of importance in these models is the so-called \emph{overlap} between two states, a measure of their correlation. In the case of the CREM, it is equal to the depth of the most recent common ancestor of two vertices, divided by $N$. Then, for a given $\beta>0$, the \emph{overlap distribution} is the limiting law (as $N\to\infty$) of the overlap of two vertices sampled independently according to the Gibbs measure with inverse temperature $\beta$. A picture that has emerged is that the existence of a gap in the support of the overlap distribution, the so-called ``overlap gap property'', is an obstruction to the existence of efficient algorithms finding approximate minimizers of the Hamiltonian. This has been rigorously proven for a certain class of algorithms by Gamarnik and Jagannath \cite{OGPAMP21} in the case of the Ising $p$-spin model, with $p\geq 4$. On the other hand, Montanari \cite{Montanari19} showed that for $p=2$, the Sherrington--Kirkpatrick model, there exists a quadratic time algorithm that can find near optimal state with high probability, assuming a widely believed conjecture that this model does not exhibit an overlap gap. A similar result for spherical spin glass models (for which the overlap distribution is explicitly known) has been obtained by Subag \cite{Subag21}.

The question of efficient \emph{sampling} of the Gibbs measure of a disordered system seems to have been considered mostly under the angle of Glauber or Langevin dynamics. See e.g. \cite{SpecGap18,GJ19} for the spherical spin glass model. We restrict ourselves here to the case of the Sherrington--Kirkpatrick model. For this model, it has been recently obtained that fast mixing occurs for $\beta < 1/4$ \cite{BB19,EKZ21}. However, very recently, El Alaoui, Montanari and Sellke \cite{EAMS22} have provided another algorithm which yields fast mixing for $\beta<1/2$, and they conjecture that this in fact holds for all $\beta<1$. They also provide a hardness result for $\beta >1$ for a certain class of algorithms. Their algorithm for the $\beta < 1/2$ phase belongs to the class of \emph{approximate message passing} algorithms, which is also the case for Montanari's algorithm for the optimization problem~\cite{Montanari19}. This illustrates the fact that Glauber or Langevin dynamics may in general not be optimal sampling algorithms, and that algorithms which exploit the underlying tree structure of the model may be efficient in a wider range of the parameters. For a discussion of this question in the context of statistical inference problems, see e.g.~\cite{GlassHardInfer19}. Altogether, this motivates the study of tree-based models as a toy problem, such as the one from the present article.

\paragraph{Outline.} The paper is organized as follows. In Section~\ref{sec:decomp}, we prove that the Kullback--Leibler divergence can be decomposed into a weighted sum of the Kullback--Leibler divergences on subtrees. In Section~\ref{sec:Lp}, we give $L^p$ bounds of the logarithm of Biggins' martingales and $L^p$ bounds of the Kullback--Leibler divergences between two Gibbs measures. Theorem~\ref{thm:main} is proven in Section~\ref{sec:proof main thm} and Theorem~\ref{th:hardness} in Section~\ref{sec:proof hardness}. Section~\ref{sec:proof_lemma} provides the proof of Lemma~\ref{lem:entropy}. Finally, we state in Section~\ref{sec:open questions} some open questions that might interest the readers.

\section{Decomposition of the Kullback--Leibler divergence} \label{sec:decomp}

The main goal of this section is to prove Theorem \ref{thm:KL decomp}. Before proving the theorem, we need the following lemma, which states that the weight of $u_1u_2$ with respect to the Gibbs measure $\mu_{\beta,m}$ can be decomposed into the product of the weights of $u_1$ and $u_2$ with respect to another two Gibbs measures.

\begin{lemma} \label{lem:gibbsprod}
  For any $\abs{u_1}=m_1$ and $\abs{u_2}=m_2$, we have the decomposition
  \begin{align*}
    \mu_{\beta,m}(u_1u_2) = \mu_{\beta,m}(u_1)\cdot \mu_{\beta,m-m_1}^{u_1}(u_2)
  \end{align*}
  for all $m\geq m_1+m_2$.
\end{lemma}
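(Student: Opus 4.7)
The plan is purely mechanical: unfold the three Gibbs measures using the definitions \eqref{def:Gibbs0} and \eqref{def:Gibbs1}, multiply the expressions for $\mu_{\beta,m}(u_1)$ and $\mu_{\beta,m-m_1}^{u_1}(u_2)$, and check that the result matches $\mu_{\beta,m}(u_1u_2)$.

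First I would expand
\[
  \mu_{\beta,m}(u_1) = e^{\beta X_{u_1}-\varphi(\beta)m_1}\cdot \frac{W^{u_1}_{\beta,m-m_1}}{W_{\beta,m}},\qquad
  \mu^{u_1}_{\beta,m-m_1}(u_2) = e^{\beta X^{u_1}_{u_2}-\varphi(\beta)m_2}\cdot \frac{W^{u_1u_2}_{\beta,m-m_1-m_2}}{W^{u_1}_{\beta,m-m_1}}.
\]
Multiplying these, the factor $W^{u_1}_{\beta,m-m_1}$ cancels (this is the telescoping observation that drives the identity), leaving
\[
  e^{\beta(X_{u_1}+X^{u_1}_{u_2}) - \varphi(\beta)(m_1+m_2)} \cdot \frac{W^{u_1u_2}_{\beta,m-m_1-m_2}}{W_{\beta,m}}.
\]

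Next I would invoke the additivity of increments along the tree: by the very definition $X^{u_1}_{u_2}=X_{u_1u_2}-X_{u_1}$, so $X_{u_1}+X^{u_1}_{u_2}=X_{u_1u_2}$. Substituting this into the exponent yields
\[
  e^{\beta X_{u_1u_2}-\varphi(\beta)(m_1+m_2)} \cdot \frac{W^{u_1u_2}_{\beta,m-(m_1+m_2)}}{W_{\beta,m}},
\]
which is precisely $\mu_{\beta,m}(u_1u_2)$ as given by \eqref{def:Gibbs0}, applied to the vertex $u_1u_2$ of depth $m_1+m_2 \le m$.

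There is no substantive obstacle here; the assumption $m \ge m_1+m_2$ is used only to ensure that the partition functions $W^{u_1u_2}_{\beta,m-m_1-m_2}$ and $W^{u_1}_{\beta,m-m_1}$ are well-defined (non-negative depths), and the identity is ultimately just a restatement of the branching/tower structure already encoded in the recursive definition of $W_{\beta,n}$ via \eqref{eq:W decomp}.
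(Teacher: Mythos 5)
Your proof is correct and follows essentially the same route as the paper's: both unfold the definitions \eqref{def:Gibbs0} and \eqref{def:Gibbs1}, cancel the factor $W^{u_1}_{\beta,m-m_1}$, and use $X^{u_1}_{u_2}=X_{u_1u_2}-X_{u_1}$ together with the additivity of the depths in the exponent. The only difference is cosmetic: you multiply the two factors and recognize $\mu_{\beta,m}(u_1u_2)$, whereas the paper starts from $\mu_{\beta,m}(u_1u_2)$ and factors it.
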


\begin{proof}
  By computation, we have
  \begin{align*}
    \mu_{\beta,m}(u_1u_2)
    & = \frac{e^{\beta X_{u_1u_2}-\varphi(\beta)m}\cdot W_{\beta,m-m_1-m_2}^{u_1u_2}}{W_{\beta,m}} \\
    & = \frac{e^{\beta X_{u_1}-\varphi(\beta)m_1}\cdot W_{\beta,m-m_1}^{u_1}}{W_{\beta,m}}
    \cdot \frac{e^{\beta X_{u_2}^{u_1}-\varphi(\beta)(m-m_1)}\cdot W_{\beta,m-m_1-m_2}^{u_1u_2}}{W_{\beta,m-m_1}^{u_1}} \\
    & = \mu_{\beta,m}(u_1)\cdot \mu_{\beta,m-m_1}^{u_1}(u_2),
  \end{align*}
  where the last equality is by \eqref{def:Gibbs0} and \eqref{def:Gibbs1}.
\end{proof}

Now we can decompose the Kullback--Leibler divergence as follows.

\begin{theorem} \label{thm:KL decomp}
  For any two $M$ and $N$ integers such that $M\leq N$, we have
  \begin{align}
    & \kld{\mu_{\beta,M,N}}{\mu_{\beta,N}}
    = \sum_{K=0}^{\floor{\frac{N}{M}}-1} \sum_{\abs{u}=KM}\mu_{\beta,M,KM}(u)\cdot \kld{\mu_{\beta,M}^u}{\mu_{\beta,N-KM}^u}. \label{eq:KL decomp.2}
  \end{align}
\end{theorem}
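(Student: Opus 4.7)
The plan is to derive the decomposition by expressing both $\mu_{\beta,M,N}$ and $\mu_{\beta,N}$ as products along the root-to-leaf path broken into $M$-blocks, then computing the log-likelihood ratio and taking expectation with respect to $\mu_{\beta,M,N}$. Set $K_0 = \lfloor N/M \rfloor$ and $r = N - K_0 M$. Every leaf $v \in \partial \T_N$ factors uniquely as $v = v^{(0)} v^{(1)} \cdots v^{(K_0-1)} v^{(K_0)}$ with $|v^{(k)}| = M$ for $0 \le k < K_0$ and $|v^{(K_0)}| = r$; write $v^{(<k)} = v^{(0)} \cdots v^{(k-1)}$, so that $|v^{(<k)}| = kM$.

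A straightforward induction on $K$ using \eqref{eq:mu_decomposition} gives
\begin{equation*}
\mu_{\beta,M,N}(v) = \prod_{k=0}^{K_0-1} \mu_{\beta,M}^{v^{(<k)}}\!\bigl(v^{(k)}\bigr) \; \cdot \; \mu_{\beta,r}^{v^{(<K_0)}}\!\bigl(v^{(K_0)}\bigr),
\end{equation*}
while a parallel induction applying Lemma~\ref{lem:gibbsprod} to peel off one $M$-block at a time from the prefix yields
\begin{equation*}
\mu_{\beta,N}(v) = \prod_{k=0}^{K_0-1} \mu_{\beta,N-kM}^{v^{(<k)}}\!\bigl(v^{(k)}\bigr) \; \cdot \; \mu_{\beta,r}^{v^{(<K_0)}}\!\bigl(v^{(K_0)}\bigr).
\end{equation*}
In both formulas, the last factor is to be interpreted as $1$ when $r = 0$. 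The key observation is that the two ``partial block'' factors are identical and cancel in the ratio, so that
\begin{equation*}
\log \frac{\mu_{\beta,M,N}(v)}{\mu_{\beta,N}(v)} = \sum_{k=0}^{K_0-1} \log \frac{\mu_{\beta,M}^{v^{(<k)}}(v^{(k)})}{\mu_{\beta,N-kM}^{v^{(<k)}}(v^{(k)})}.
\end{equation*}

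To conclude, I would multiply by $\mu_{\beta,M,N}(v)$, sum over $v$, and interchange the order of summation. The $k$-th term depends on $v$ only through $v^{(<k+1)} = v^{(<k)} v^{(k)}$, whose marginal under $\mu_{\beta,M,N}$ is $\mu_{\beta,M,(k+1)M}$; by \eqref{eq:mu_decomposition} this marginal factorises as $\mu_{\beta,M,kM}(u)\,\mu_{\beta,M}^u(w)$ for $|u|=kM$, $|w|=M$. The inner sum over $w$ is then exactly $\kld{\mu_{\beta,M}^u}{\mu_{\beta,N-kM}^u}$, giving \eqref{eq:KL decomp.2}. The whole argument is a chain rule for relative entropy adapted to the tree-multiplicative structure and presents no analytic difficulty; the only point requiring genuine care is the cancellation of the final length-$r$ factor, which is precisely what makes the sum in \eqref{eq:KL decomp.2} stop at $K_0 - 1$ rather than extend to $K_0$.
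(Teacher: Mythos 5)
Your argument is correct and is in substance the paper's own proof: both rest on \eqref{eq:mu_decomposition} and Lemma~\ref{lem:gibbsprod}, the cancellation of the final length-$r$ factor (the paper's reduction from $N$ to $N'=\floor{N/M}M$ in \eqref{eq:KL N' N}), and a chain-rule telescoping of the log-likelihood ratio over $M$-blocks. The only difference is presentational — you write the full product factorisations and apply Fubini once, whereas the paper packages the same computation as a one-step recursion \eqref{eq:backward induct} iterated over $K$.
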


\begin{proof}
  Denote $N'=\floor{\frac{N}{M}}\cdot M$ for simplicity. 
  By \eqref{eq:mu_decomposition} and Lemma \ref{lem:gibbsprod}, for all $\abs{u_1}=N'$ and $\abs{u_2}=N-N'$, we have
  \begin{align}
    \log\left(\frac{\mu_{\beta,M,N}(u_1u_2)}{\mu_{\beta,N}(u_1u_2)}\right)
    = \log\left(\frac{\mu_{\beta,M,N'}(u_1)\cdot \mu_{\beta,N-N'}^{u_1}(u_2)}{\mu_{\beta,N}(u_1)\cdot \mu_{\beta,N-N'}^{u_1}(u_2)}\right) 
    =\log\left(\frac{\mu_{\beta,M,N'}(u_1)}{\mu_{\beta,M,N}(u_1)}\right). \label{eq:N->N'}
  \end{align}
  Thus, the Kullback--Leibler divergence can be rewritten as
  \begin{align}
    &\kld{\mu_{\beta,M,N}}{\mu_{\beta,N}} \nonumber \\
    &= \sum_{\abs{u}=N} \mu_{\beta,M,N}(u)\cdot \log\left(\frac{\mu_{\beta,M,N}(u)}{\mu_{\beta,N}(u)}\right) \nonumber \\
    &= \sum_{\abs{u_1}=N'}\sum_{\abs{u_2}=N-N'} \mu_{\beta,M,N}(u_1u_2) \cdot \log\left(\frac{\mu_{\beta,M,N}(u_1u_2)}{\mu_{\beta,N}(u_1u_2)}\right) \nonumber \\
    &= \sum_{\abs{u_1}=N'}\sum_{\abs{u_2}=N-N'} \mu_{\beta,M,N'}(u_1)\cdot \mu_{\beta,N-N'}^{u_1}(u_2) \cdot \log\left(\frac{\mu_{\beta,M,N'}(u_1)}{\mu_{\beta,N}(u_1)}\right) && \text{(by \eqref{eq:mu_decomposition} and \eqref{eq:N->N'})} \nonumber \\
    & = \sum_{\abs{u_1}=N'} \mu_{\beta,M,N'}(u_1)\cdot \log\left(\frac{\mu_{\beta,M,N'}(u_1)}{\mu_{\beta,N}(u_1)}\right) \nonumber \\
    &= \kld{\mu_{\beta,M,N'}}{\mu_{\beta,N}}. \label{eq:KL N' N}
  \end{align}

  Next, by \eqref{eq:mu_decomposition} and Lemma~\ref{lem:gibbsprod}, for all $\abs{u_1}=KM$, $\abs{u_2}=M$ and $0\leq K\leq \floor{\frac{N}{M}}-1$, we have
  \begin{align}
    \log\left(\frac{\mu_{\beta,M,(K+1)M}(u_1u_2)}{\mu_{\beta,N}(u_1u_2)}\right)
    &= \log\left(\frac{\mu_{\beta,M,KM}(u_1)\cdot \mu_{\beta,M}^{u_1}(u_2)}{\mu_{\beta,N}(u_1)\cdot \mu_{\beta,N-KM}^{u_1}(u_2)}\right)  \nonumber \\
    &=\log\left(\frac{\mu_{\beta,M,KM}(u_1)}{\mu_{\beta,N}(u_1)}\right)+\log\left(\frac{\mu_{\beta,M}^{u_1}(u_2)}{\mu_{\beta,N-KM}^{u_1}(u_2)}\right). \label{eq:N->N'.2}
  \end{align}
  Thus,
  \begin{align}
    &\kld{\mu_{\beta,M,(K+1)M}}{\mu_{\beta,N}} \nonumber \\
    &=\sum_{\abs{u}=(K+1)M}\mu_{\beta,M,(K+1)M}(u)\cdot \log\left(\frac{\mu_{\beta,M,(K+1)M}(u)}{\mu_{\beta,N}(u)}\right) \nonumber \\
    &=\sum_{\abs{u_1}=KM}\sum_{\abs{u_2}=M}\mu_{\beta,M,KM}(u_1) \nonumber \\
    &\qquad\cdot \mu_{\beta,M}^{u_1}(u_2)\cdot \left[\log\left(\frac{\mu_{\beta,M,KM}(u_1)}{\mu_{\beta,N}(u_1)}\right)+\log\left(\frac{\mu_{\beta,M}^{u_1}(u_2)}{\mu_{\beta,N-KM}^{u_1}(u_2)}\right)\right] && \text{(by \eqref{eq:mu_decomposition} and \eqref{eq:N->N'.2})} \nonumber \\
    &= \kld{\mu_{\beta,M,KM}}{\mu_{\beta,N}} 
    + \sum_{\abs{u_1}=KM}\mu_{\beta,M,KM}(u_1)\cdot \kld{\mu_{\beta,M}^{u_1}}{\mu_{\beta,N-KM}^{u_1}}.\label{eq:backward induct}
  \end{align}
  
  Finally, by \eqref{eq:KL N' N}, \eqref{eq:backward induct} and the fact that $\kld{\mu_{\beta,M,0}}{\mu_{\beta,N}}=0$, we derive \eqref{eq:KL decomp.2}. 
\end{proof}

Next, we show that the Kullback--Leibler
divergence between two Gibbs measures can be written in terms of the logarithms of the partition functions.

\begin{proposition} \label{prop:KL decomp}
  For any two $M$ and $N$ integers such that $M\leq N$, we have
  \begin{align}
    & \kld{\mu_{\beta,M}}{\mu_{\beta,N}} = \log W_{\beta,N} - \log W_{\beta,M} - \sum_{\abs{u}=M}\mu_{\beta,M}(u)\cdot \log W^u_{\beta,N-M}.
  \end{align}
\end{proposition}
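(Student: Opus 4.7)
The proposition is, at heart, a direct computation, so the plan is to unfold the definitions of both Gibbs measures on $\partial \T_M$ and take the logarithm of the ratio.

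First I would write down the densities of $\mu_{\beta,M}$ and $\mu_{\beta,N}$ at a vertex $u$ with $|u|=M$. From \eqref{def:Gibbs0}, since $W^u_{\beta,0}=1$, one has
\[
\mu_{\beta,M}(u) = \frac{e^{\beta X_u - \varphi(\beta)M}}{W_{\beta,M}}, \qquad \mu_{\beta,N}(u) = e^{\beta X_u - \varphi(\beta)M}\cdot \frac{W^u_{\beta,N-M}}{W_{\beta,N}}.
\]
(Here I am using that $\mu_{\beta,N}$ restricted to $\partial\T_M$ is a probability measure, as noted after \eqref{def:Gibbs0}.)

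Next I would form the ratio; the exponential factor $e^{\beta X_u - \varphi(\beta)M}$ cancels, leaving
\[
\frac{\mu_{\beta,M}(u)}{\mu_{\beta,N}(u)} = \frac{W_{\beta,N}}{W_{\beta,M}\, W^u_{\beta,N-M}}.
\]
Taking logarithms gives
\[
\log\frac{\mu_{\beta,M}(u)}{\mu_{\beta,N}(u)} = \log W_{\beta,N} - \log W_{\beta,M} - \log W^u_{\beta,N-M}.
\]

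Finally I would substitute this into the definition \eqref{def:KL} of the Kullback--Leibler divergence on $\partial\T_M$. The first two terms do not depend on $u$, and since $\sum_{|u|=M}\mu_{\beta,M}(u)=1$, they pull out as $\log W_{\beta,N} - \log W_{\beta,M}$, while the third term yields exactly $-\sum_{|u|=M}\mu_{\beta,M}(u)\log W^u_{\beta,N-M}$. This produces the claimed identity. There is no genuine obstacle here — the only thing to be careful about is the convention that $\mu_{\beta,M}$ and $\mu_{\beta,N}$ are both being viewed as probability measures on $\partial\T_M$ via the restriction noted after \eqref{def:Gibbs0}, so that the KL divergence is well-defined by the formula \eqref{def:KL} with $\Omega=\partial\T_M$.
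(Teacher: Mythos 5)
Your proposal is correct and follows essentially the same route as the paper: unfold \eqref{def:Gibbs0} at $|u|=M$ (with $W^u_{\beta,0}=1$), cancel the factor $e^{\beta X_u-\varphi(\beta)M}$ to get $\mu_{\beta,M}(u)/\mu_{\beta,N}(u)=W_{\beta,N}/(W_{\beta,M}W^u_{\beta,N-M})$, take logarithms, and sum against $\mu_{\beta,M}$. Your remark about viewing both measures as probability measures on $\partial\T_M$ is exactly the convention the paper uses, so nothing is missing.
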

\begin{proof}
  For any $\abs{u}=M$, we have
  \begin{align*}
    \frac{\mu_{\beta,M}(u)}{\mu_{\beta,N}(u)}
    = \frac{e^{\beta X_u-\varphi(\beta)M}\cdot \frac{1}{W_{\beta,M}}}{e^{\beta X_u-\varphi(\beta)M}\cdot \frac{W_{\beta,N-M}^u}{W_{\beta,N}}} 
    = \frac{W_{\beta,N}}{W_{\beta,M}\cdot W^u_{\beta,N-M}}.
  \end{align*}
  Thus,
  \begin{align*}
    \kld{\mu_{\beta,M}}{\mu_{\beta,N}}
    & = \sum_{\abs{u}=M} \mu_{\beta,M}(u)\cdot \log\left(\frac{\mu_{\beta,M}(u)}{\mu_{\beta,N}(u)}\right) \\
    & = \sum_{\abs{u}=M} \mu_{\beta,M}(u) \cdot
    \left(
    \log W_{\beta,N} - \log W_{\beta,M} - \log W_{\beta,N-M}^u
    \right) \\
    & = \log W_{\beta,N} - \log W_{\beta,M} - \sum_{\abs{u}=M}\mu_{\beta,M}(u)\cdot \log W^u_{\beta,N-M}.
  \end{align*}
  This completes the proof.
\end{proof}

\section{Some \texorpdfstring{$L^p$}{Lp} bounds} \label{sec:Lp}

We first show that whenever $\beta\in [0,\beta_c)$, $(\log W_{\beta,n})_{n\geq 0}$ is bounded in $L^p$ for all $p>1$. 

\begin{lemma} \label{lem:log W}
  Let $\beta\in [0,\beta_c)$. Then $(\log W_{\beta,n})_{n\geq 0}$ is a supermartingale such that \[\sup_{n\geq 0}\norm{\log W_{\beta,n}}_p < \infty, \quad p\geq1.\]
\end{lemma}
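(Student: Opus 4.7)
The statement splits into two parts: the supermartingale property and the $L^p$-boundedness. The supermartingale property is immediate from Fact~\ref{fact:W} (which says $(W_{\beta,n})$ is a nonnegative $(\F_n)$-martingale) and conditional Jensen applied to the concave function $\log$:
\[
\E[\log W_{\beta,n+1} \mid \F_n] \le \log \E[W_{\beta,n+1} \mid \F_n] = \log W_{\beta,n},
\]
with integrability supplied by the $L^p$ bound below. To establish that bound I will control $\log^+ W_{\beta,n}$ and $\log^- W_{\beta,n}$ separately.

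\textbf{Upper tail.} My first step is to show that $\sup_n \E[W_{\beta,n}^q] < \infty$ for some $q > 1$. Since $\beta \in [0,\beta_c)$, the definition of $\beta_c$ yields $\beta\varphi'(\beta) < \varphi(\beta)$, so the map $q \mapsto q\varphi(\beta) - \varphi(q\beta)$ vanishes at $q=1$ with strictly positive right-derivative $\varphi(\beta) - \beta\varphi'(\beta)$. One may therefore pick $q > 1$ with $q\beta \in \mathcal D(\varphi)^\circ$ and $q\varphi(\beta) > \varphi(q\beta)$. The classical Biggins--Kahane--Peyri\`ere $L^q$-boundedness criterion for branching random walk martingales (see e.g.~\cite{KP76,Biggins77}) then gives $\sup_n \|W_{\beta,n}\|_q < \infty$. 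Combined with the elementary inequality $(\log^+ x)^p \le C_{p,\eta}(1 + x^\eta)$ (valid for any $\eta > 0$) and the choice $\eta \in (0, q-1]$, this yields $\sup_n \E[(\log^+ W_{\beta,n})^p] < \infty$ for every $p \ge 1$.

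\textbf{Lower tail and main obstacle.} For the negative part I use the identity $W_{\beta,n} = \E[W_{\beta,\infty} \mid \F_n]$, a consequence of the $L^1$-convergence in Fact~\ref{fact:W}. Since $x \mapsto x^{-s}$ is convex on $(0,\infty)$ for every $s > 0$, conditional Jensen gives $W_{\beta,n}^{-s} \le \E[W_{\beta,\infty}^{-s} \mid \F_n]$, whence
\[
\sup_n \E[W_{\beta,n}^{-s}] \le \E[W_{\beta,\infty}^{-s}].
\]
Coupling this with $(\log^- x)^p \le C_{p,\eta}(1 + x^{-\eta})$ reduces the desired bound on $\log^- W_{\beta,n}$ to the finiteness of one negative moment of $W_{\beta,\infty}$, which is the crux of the argument. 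Under Assumption~\ref{assump:phi} (which ensures $\varphi(-\delta) < \infty$ for some $\delta > 0$) and with $d \ge 2$, I plan to invoke Liu's theorem on negative moments of fixed points of the smoothing transform, applied to the distributional identity $W_{\beta,\infty} \stackrel{d}{=} \sum_{i=0}^{d-1} e^{\beta Y_i - \varphi(\beta)} W_{\beta,\infty}^{(i)}$, to obtain $\E[W_{\beta,\infty}^{-s}] < \infty$ for all $s > 0$. Failing a ready-made reference, the estimate can be derived directly from this fixed-point identity together with the AM--GM inequality $\sum_i a_i \ge d(\prod_i a_i)^{1/d}$, producing the contractive estimate
\[
\E[W_{\beta,\infty}^{-s}] \le d^{-s} e^{s\varphi(\beta)} \E\!\left[e^{-(s\beta/d)\sum_i Y_i}\right] \E[W_{\beta,\infty}^{-s/d}]^d
\]
whose prefactor is finite for small $s$ (again by Assumption~\ref{assump:phi}, after one more application of AM--GM to bound $e^{-(s\beta/d)\sum_i Y_i}$ by $d^{-1}\sum_i e^{-s\beta Y_i}$), and a bootstrap then propagates finiteness upward.
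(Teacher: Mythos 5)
Your main route is essentially the paper's. The supermartingale part is identical (conditional Jensen for the concave logarithm), and for the $L^p$ bound both arguments reduce to positive and negative moment bounds on $W_{\beta,n}$ via an elementary inequality of the type $\abs{\log x}^p\le C\,(x^{\eta}+x^{-s})$, with the negative moments supplied by Liu's fixed-point theorem --- which is exactly the reference the paper invokes (\cite[Theorem~2.4]{Liu01}, together with Assumption~\ref{assump:phi}). Your transfer from $\E[W_{\beta,\infty}^{-s}]<\infty$ to $\sup_n\E[W_{\beta,n}^{-s}]\le\E[W_{\beta,\infty}^{-s}]$ via $W_{\beta,n}=\E[W_{\beta,\infty}\mid\F_n]$ and conditional Jensen is correct and a clean way to get uniformity in $n$. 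One simplification: the Biggins $L^q$ step for the upper tail is superfluous. Since $\E[W_{\beta,n}]=1$, the bound $(\log^+x)^p\le C_p\,x$ already gives $\sup_n\E[(\log^+W_{\beta,n})^p]<\infty$; indeed your own choice $\eta\le q-1<1$ only ever uses moments of order below one, which follow from Jensen and the mean-one property, so Fact~\ref{fact:W} suffices there (this is what the paper does).

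The one genuine problem is the self-contained fallback for $\E[W_{\beta,\infty}^{-s}]<\infty$: as sketched it is circular. The recursion $\E[W_{\beta,\infty}^{-s}]\le C_s\,\E[W_{\beta,\infty}^{-s/d}]^d$ propagates finiteness from exponent $s/d$ up to exponent $s$, but no negative moment of $W_{\beta,\infty}$ of any positive order is known a priori, so there is no base case. Iterating downwards, after $K$ steps you must keep $\E[W_{\beta,\infty}^{-s/d^K}]^{d^K}$ bounded as $K\to\infty$, which requires $\E[W_{\beta,\infty}^{-u}]\le e^{Cu}$ for small $u$, i.e.\ a small exponential moment of $\log^- W_{\beta,\infty}$ --- precisely what is being proved. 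Nor can the gap be closed by starting from $W_{\beta,0}=1$ and iterating the same bound in $n$: repeated AM--GM only bounds $W_{\beta,n}$ from below by $d^n$ times the geometric mean of the leaf weights, and since by Jensen $\varphi(\beta)\ge\log d+\tfrac{\beta}{d}\,\E\bigl[\sum_i Y_i\bigr]$ (strictly, outside degenerate cases), the resulting bound on $\E[W_{\beta,n}^{-s}]$ grows exponentially in $n$; the AM--GM step is intrinsically too lossy for a uniform estimate. A genuinely self-contained argument goes instead through the Laplace transform $\psi(t)=\E[e^{-tW_{\beta,\infty}}]$, which is finite by construction, satisfies $\psi(t)\le\E\bigl[\prod_i\psi\bigl(t\,e^{\beta Y_i-\varphi(\beta)}\bigr)\bigr]$, and is small at some fixed large $t$ because $W_{\beta,\infty}>0$ a.s.; iterating this yields polynomial decay of $\psi$, hence a left-tail estimate and small negative moments. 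Since that is essentially Liu's proof, you should simply cite his theorem, as the paper does, and drop the fallback.
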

\begin{proof}
  The supermartingale property of $(\log W_{\beta,n})_{n\geq0}$ follows from the fact that $(W_{\beta,n})_{n\geq 0}$ is a martingale and $x\mapsto\log x$ is a concave function.

  Now let $p \geq 1$. Since $\beta<\beta_c$, by Fact~\ref{fact:W}, we derive that
  \[\sup_{n\geq 0}\norm{W_{\beta,n}}_1 < \infty\]
  Furthermore, using Assumption~\ref{assump:phi} and Liu \cite[Theorem 2.4]{Liu01}, we have for some $s>0$, 
  \[\sup_{n\geq 0}\|W_{\beta,n}^{-s}\|_1 < \infty.\]  
 Then by the fact that
  \begin{align*}
  \abs{\log x}^p\leq C\left(\abs{x}+\abs{x}^{-s}\right)
  \end{align*}
  for some constant $C>0$, we have
  \begin{align*}
    \sup_{n\geq 0}\norm{\log W_{\beta,n}}_p^p\leq  C\cdot\sup_{n\geq 0}\left(\norm{W_{\beta,n}}_1+\|W_{\beta,n}^{-s}\|_1\right) < \infty.
  \end{align*}
  This proves the lemma.
\end{proof}

Lemma \ref{lem:log W} implies the following proposition about the boundedness of the Kullback--Leibler divergence between two Gibbs measures.

\begin{proposition}
\label{prop:3.3}
For any $p>1$, for any two integers $M$ and $N$ such that $M\leq N$, there exists a constant $C(p)>0$ such that
\[\norm{\kld{\mu_{\beta,M}}{\mu_{\beta,N}}}_p\leq C(p).\]
\end{proposition}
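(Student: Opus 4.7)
The plan is to start from the identity in Proposition~\ref{prop:KL decomp}, which writes
\[
\kld{\mu_{\beta,M}}{\mu_{\beta,N}} = \log W_{\beta,N} - \log W_{\beta,M} - \sum_{\abs{u}=M}\mu_{\beta,M}(u)\cdot \log W^u_{\beta,N-M},
\]
and then to estimate each of the three terms separately in $L^p$. By the triangle inequality in $L^p$ it suffices to show that each term has $L^p$ norm bounded by a constant depending only on $p$ (and $\beta$). For the first two terms, $\norm{\log W_{\beta,N}}_p$ and $\norm{\log W_{\beta,M}}_p$, Lemma~\ref{lem:log W} already provides a uniform bound, since $\beta<\beta_c$.

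The main step is therefore to control the weighted sum
\[
S := \sum_{\abs{u}=M}\mu_{\beta,M}(u)\cdot \log W^u_{\beta,N-M}.
\]
The key observation is that, conditionally on $\F_M$, the weights $\mu_{\beta,M}(u)$ are deterministic and sum to $1$, while the family $(W^u_{\beta,N-M})_{\abs{u}=M}$ consists of iid copies of $W_{\beta,N-M}$, independent of $\F_M$. Since $x\mapsto \abs{x}^p$ is convex for $p\geq 1$, Jensen's inequality applied to the probability measure $\mu_{\beta,M}$ on $\partial\T_M$ gives
\[
\abs{S}^p \leq \sum_{\abs{u}=M}\mu_{\beta,M}(u)\cdot \abs{\log W^u_{\beta,N-M}}^p.
\]
Taking expectation and then conditioning on $\F_M$, the independence yields
\[
\EX{\abs{S}^p} \leq \EX{\sum_{\abs{u}=M}\mu_{\beta,M}(u)\cdot \EX{\abs{\log W^u_{\beta,N-M}}^p \,\middle|\, \F_M}} = \EX{\abs{\log W_{\beta,N-M}}^p},
\]
which is uniformly bounded in $N-M$ by Lemma~\ref{lem:log W}.

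Combining the three bounds via the triangle inequality in $L^p$ yields a constant $C(p)$, independent of $M$ and $N$, such that $\norm{\kld{\mu_{\beta,M}}{\mu_{\beta,N}}}_p \leq C(p)$. I expect the only real subtlety to be the Jensen/conditioning step for $S$: one must exploit that the randomness of the weights lives in $\F_M$ while the randomness of the logarithms of the descendant partition functions is independent of $\F_M$. Everything else reduces to a direct application of Lemma~\ref{lem:log W}.
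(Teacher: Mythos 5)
Your proposal is correct and follows essentially the same route as the paper: Proposition~\ref{prop:KL decomp} plus Minkowski, Lemma~\ref{lem:log W} for the two $\log W$ terms, and then Jensen's inequality with respect to the $\F_M$-measurable weights $\mu_{\beta,M}(u)$ combined with the branching property to reduce the weighted sum to $\EX{\abs{\log W_{\beta,N-M}}^p}$. No gaps.
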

\begin{proof}
By Minkowski's inequality and Proposition~\ref{prop:KL decomp}, we have
\begin{align*}
\norm{\kld{\mu_{\beta,M}}{\mu_{\beta,N}}}_p\leq  \norm{\log W_{\beta,N}}_p + \norm{\log W_{\beta,M}}_p + \norm{\sum_{\abs{u}=M}\mu_{\beta,M}(u)\cdot \log W^u_{\beta,N-M}}_p.
\end{align*}
Since the first and the second term above are bounded by Lemma~\ref{lem:log W}, it suffices to prove that the third term above is bounded. By the branching property and Jensen's inequality, we have 
\begin{align}
\cEX{\abs{\sum_{\abs{u}=M}\mu_{\beta,M}(u)\cdot \log W^u_{\beta,M}}^p}{\F_M}
&\leq \EX{\abs{\log W_{\beta,N-M}}^p}. \label{eq:lem 3.3.1} 
\end{align}
Therefore, by \eqref{eq:lem 3.3.1},
\begin{align}
\norm{\sum_{\abs{u}=M}\mu_{\beta,M}(u)\cdot \log W^u_{\beta,N-M}}_p
\leq \norm{\log W_{\beta,N-M}}_p. \label{eq:lem 3.3.2}
\end{align}
Finally, we conclude by \eqref{eq:lem 3.3.2} and Lemma~\ref{lem:log W} that
\begin{align*}
\norm{\kld{\mu_{\beta,M}}{\mu_{\beta,N}}}_p \leq 3\cdot \sup_{n\geq 0}\norm{\log W_{\beta,n}}_p < \infty,
\end{align*}
and the proof is completed.
\end{proof}

\section{Proof of Theorem \ref{thm:main}} \label{sec:proof main thm}

In this section, we prove \eqref{eq:main.1} and \eqref{eq:main.2} of Theorem~\ref{thm:main}. The proof of \eqref{eq:main.1} relies essentially on the decomposition theorem of the Kullback--Leibler divergence (Theorem~\ref{thm:KL decomp}) and Proposition~\ref{prop:3.3}. The proof of \eqref{eq:main.2} needs more precise moment estimates.

\subsection{Proof of (\ref{eq:main.1})} \label{sec:proof thm main.1}

Let $p\ge1$. By Theorem~\ref{thm:KL decomp} and Minkowski's inequality, we have
\begin{align}
\norm{\kld{\mu_{\beta,M,N}}{\mu_{\beta,N}}}_p
&= \norm{\sum_{K=0}^{\floor{\frac{N}{M}}-1} \sum_{\abs{u}=KM}\mu_{\beta,M,KM}(u)\cdot \kld{\mu_{\beta,M}^u}{\mu_{\beta,N-KM}^u}}_p \nonumber \\
&\leq \sum_{K=0}^{\floor{\frac{N}{M}}-1}\norm{ \sum_{\abs{u}=KM}\mu_{\beta,M,KM}(u)\cdot \kld{\mu_{\beta,M}^u}{\mu_{\beta,N-KM}^u}}_p. \label{eq:main pf.1}
\end{align}

Let $K\le \lfloor N/M\rfloor-1$. Applying Jensen's inequality to $\mu_{\beta,M,KM}$, we have
\begin{align}
&\EX{\abs{\sum_{\abs{u}=KM}\mu_{\beta,M,KM}(u)\cdot \kld{\mu_{\beta,M}^u}{\mu_{\beta,N-KM}^u}}^p} \nonumber \\
&\leq \EX{\sum_{\abs{u}=KM}\mu_{\beta,M,KM}(u)\cdot \abs{\kld{\mu_{\beta,M}^u}{\mu_{\beta,N-KM}^u}}^p}. \label{eq:main pf.2}
\end{align}
Then by the law of iterated expectation and the branching property, \eqref{eq:main pf.2} is equal to
\begin{align}
&\EX{\sum_{\abs{u}=KM}\mu_{\beta,M,KM}(u)\cdot \cEX{\abs{\kld{\mu_{\beta,M}^u}{\mu_{\beta,N-KM}^u}}^p}{\F_{KM}}} \nonumber \\
&=\EX{\sum_{\abs{u}=KM}\mu_{\beta,M,KM}(u)}\cdot \EX{\abs{\kld{\mu_{\beta,M}}{\mu_{\beta,N-KM}}}^p} = \EX{\abs{\kld{\mu_{\beta,M}}{\mu_{\beta,N-KM}}}^p}. \label{eq:main pf.3}
\end{align}
Combining \eqref{eq:main pf.1}, \eqref{eq:main pf.3} and Proposition~\ref{prop:3.3}, we conclude that 
\begin{align*}
\norm{\kld{\mu_{\beta,M,N}}{\mu_{\beta,N}}}_p
\leq \floor*{\frac{N}{M}}\cdot C(p).
\end{align*}

\subsection{Proof of (\ref{eq:main.2})} \label{sec:proof 1.5}

In this section, we prove \eqref{eq:main.2} which gives a tighter control on the Kullback--Leibler divergence between $\mu_{\beta,M,N}$ and the Gibbs measure $\mu_{\beta,N}$. We start with the following simple lemma.

\begin{lemma} \label{lem:sum pth power}
Let $\beta\in [0,\beta_c)$. For all $M\in\N$, there exists $r\in (0,1)$ independent of $M$ such that,
  \begin{align}
    &\EX{\sum_{\abs{u}=M}\mu_{\beta,M}(u)^2}  \le r \label{eq:sum pth power.1}
  \end{align}
Moreover, for all $K\in\N$,
  \begin{align}
    &\EX{\sum_{\abs{u}=KM}\mu_{\beta,M,KM}(u)^2} \le r^K. \label{eq:sum pth power.2}
  \end{align}
\end{lemma}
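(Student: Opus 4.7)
The plan is to derive \eqref{eq:sum pth power.2} from \eqref{eq:sum pth power.1} by a multiplicative iteration based on the branching property, and then to prove \eqref{eq:sum pth power.1} by bounding the depth-$M$ sum of squared Gibbs weights by a depth-one quantity whose expectation is controlled uniformly in~$M$ via Fact~\ref{fact:W}.

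Set $S_M := \sum_{|u|=M}\mu_{\beta,M}(u)^2$ and $T_K := \sum_{|u|=KM}\mu_{\beta,M,KM}(u)^2$. Decomposing a depth-$KM$ vertex $u=(v,w)$ with $|v|=(K-1)M$ and $|w|=M$, the recursion \eqref{eq:mu_decomposition} together with Lemma~\ref{lem:gibbsprod} gives
\[
T_K \;=\; \sum_{|v|=(K-1)M}\mu_{\beta,M,(K-1)M}(v)^2\cdot S^v_M,\qquad S^v_M := \sum_{|w|=M}\bigl[\mu^v_{\beta,M}(w)\bigr]^2.
\]
Since $\mu_{\beta,M,(K-1)M}(v)^2$ is $\F_{(K-1)M}$-measurable while by the branching property $(S^v_M)_{|v|=(K-1)M}$ is a family of iid copies of $S_M$ independent of $\F_{(K-1)M}$, conditioning on $\F_{(K-1)M}$ and then taking the outer expectation yields $\EX{T_K}=\EX{S_M}\cdot\EX{T_{K-1}}$. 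Induction starting from $T_0=1$ then gives $\EX{T_K}=\EX{S_M}^K$, so \eqref{eq:sum pth power.2} follows from \eqref{eq:sum pth power.1} with the same constant $r$.

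For \eqref{eq:sum pth power.1}, applying the same factorisation at depth~$1$ yields $S_M = \sum_{i=0}^{d-1}\mu_{\beta,M}(i)^2\cdot S^i_{M-1}$, and since $S^i_{M-1}\le 1$ we obtain the dominating variable
\[
S_M \;\le\; R_M \;:=\; \sum_{i=0}^{d-1}\mu_{\beta,M}(i)^2 \;=\; \frac{\sum_i \bigl(Z^{(M)}_i\bigr)^2}{\bigl(\sum_j Z^{(M)}_j\bigr)^2},\qquad Z^{(M)}_i := e^{\beta Y_i-\varphi(\beta)}\,W^i_{\beta,M-1}.
\]
The $(W^i_{\beta,M-1})_{i=0}^{d-1}$ are iid copies of $W_{\beta,M-1}$ independent of $(Y_0,\ldots,Y_{d-1})$, and by Fact~\ref{fact:W} they converge jointly (a.s.) to strictly positive limits $W^i_{\beta,\infty}$. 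Thus $Z^{(M)}_i\to Z^\infty_i := e^{\beta Y_i-\varphi(\beta)}\,W^i_{\beta,\infty}$ a.s., with each $Z^\infty_i>0$ a.s.\ (Assumption~\ref{assump:phi} ensures $Y_i>-\infty$ a.s.). The map $(z_0,\ldots,z_{d-1})\mapsto \sum_i z_i^2/(\sum_j z_j)^2$ being continuous on $(0,\infty)^d$ and bounded by~$1$, bounded convergence gives $\EX{R_M}\to\EX{R_\infty}$, where $R_\infty := \sum_i (Z^\infty_i)^2/(\sum_j Z^\infty_j)^2$.

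Because $d\ge 2$ and all $Z^\infty_i$ are strictly positive a.s., we have $R_\infty<1$ a.s., hence $\EX{R_\infty}<1$; the same non-degeneracy also yields $\EX{R_M}<1$ for every finite $M$. Using the convergence, only finitely many indices $M$ can satisfy $\EX{R_M}\ge(1+\EX{R_\infty})/2$, and each of those expectations is still strictly less than~$1$, so $r:=\sup_{M\ge 1}\EX{R_M}<1$, which is the uniform bound required. The delicate point of the argument is precisely this uniformity in~$M$: the bound $R_M<1$ a.s.\ is automatic for each fixed~$M$, but obtaining a constant $r\in(0,1)$ independent of~$M$ relies essentially on the strict positivity of $W_{\beta,\infty}$ supplied by Fact~\ref{fact:W}, which is what prevents the depth-one marginal of $\mu_{\beta,M}$ from degenerating as $M\to\infty$.
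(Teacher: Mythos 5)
Your proof is correct, and while the reduction of \eqref{eq:sum pth power.2} to \eqref{eq:sum pth power.1} via the branching property and induction is the same as in the paper, your argument for the uniform bound \eqref{eq:sum pth power.1} is genuinely different. You bound the depth-$M$ sum of squares by the squared first-generation marginal $R_M=\sum_i\mu_{\beta,M}(i)^2$, write it as a continuous bounded function of $\bigl(e^{\beta Y_i-\varphi(\beta)}W^i_{\beta,M-1}\bigr)_{i<d}$, and use the almost sure convergence and strict positivity of $W_{\beta,\infty}$ from Fact~\ref{fact:W} together with bounded convergence to get $\EX{R_M}\to\EX{R_\infty}<1$, hence a uniform $r<1$ after handling the finitely many exceptional $M$ (each with $\EX{R_M}<1$ since $R_M<1$ a.s.). The paper instead bounds the $\ell^2$-sum by an $\ell^p$-sum for a fixed $p\in(1,2]$ with $\varphi(p\beta)<p\varphi(\beta)$ (available precisely because $\beta<\beta_c$), splits on the event $\{W_{\beta,M}\ge 1/2\}$, and obtains the explicit estimate $\EX{\sum_u\mu_{\beta,M}(u)^2}\le \PP(W_{\beta,M}<1/2)+2^pe^{(\varphi(p\beta)-p\varphi(\beta))M}$, using only $W_{\beta,M}\to W_{\beta,\infty}$ with $\E[W_{\beta,\infty}]=1$ to keep $\PP(W_{\beta,M}<1/2)$ bounded away from $1$. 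Your route is softer: it avoids choosing $p$ and the moment condition, but it is non-quantitative and leans on the full strength of Fact~\ref{fact:W} (strict positivity of the limit), whereas the paper's computation gives an explicit rate in $M$; both arguments are valid for all $\beta\in[0,\beta_c)$. One cosmetic remark: the positivity of $e^{\beta Y_i}$ needs no appeal to Assumption~\ref{assump:phi}, since $Y_i$ is real-valued by definition; the assumption (together with $\beta<\beta_c$) is only needed so that $\varphi(\beta)<\infty$ and Fact~\ref{fact:W} applies.
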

\begin{proof}[Proof of Lemma \ref{lem:sum pth power}]
  For $x\in (0,1)$, $x^2<x$. Thus, by the fact that $\mu_{\beta,M}(u)\in (0,1)$ for all $\abs{u}=M$, we derive that
  \begin{align*}
    \sum_{\abs{u}=M} \mu_{\beta,M}(u)^2 < \sum_{\abs{u}=M} \mu_{\beta,M}(u) = 1,
  \end{align*}
  for every $M\in\N$. This shows that \eqref{eq:sum pth power.1} holds for every fixed $M\in\N$ and with $r<1$ possibly depending on $M$. Uniformity in $M$ follows as soon as we  show that \[\limsup_{M\to\infty} \EX{\sum_{\abs{u}=M}\mu_{\beta,M}(u)^2} < 1.\]
  
  To this end, recall that $W_{\beta,M} \to W_{\beta,\infty}$ almost surely as $M\to\infty$ and that $\E[W_{\beta,\infty}] = 1$. Hence, there exist $a<1$ and $M_0\in\N$ such that 
  \begin{equation}
  \label{eq:WbetaMr}
  \forall M\ge M_0: \PP(W_{\beta,M} < 1/2) \le a.
  \end{equation}
  Now fix $p\in(1,2]$ such that $\varphi(p\beta) < p\varphi(\beta)$, which exists because $\beta < \beta_c$. Then decompose:
  \begin{align*}
  \EX{\sum_{\abs{u}=M}\mu_{\beta,M}(u)^2} 
  &\le \EX{\sum_{\abs{u}=M}\mu_{\beta,M}(u)^p}\\
  &\le \EX{\left(\sum_{\abs{u}=M}\mu_{\beta,M}(u)^p\right)\Ind_{W_{\beta,M} < 1/2}}+ \EX{\left(\sum_{\abs{u}=M}\mu_{\beta,M}(u)^p\right)\Ind_{W_{\beta,M} \ge 1/2}}\\
  &\le \PP(W_{\beta,M} < 1/2) + 2^p e^{(\varphi(p\beta) - p\varphi(\beta))M},
  \end{align*}
  using the definition of $\mu_{\beta,M}(u)$ for the last inequality. This shows that \[\limsup_{M\to\infty} \EX{\sum_{\abs{u}=M}\mu_{\beta,M}(u)^2} \le a,\] 
  which concludes the proof of \eqref{eq:sum pth power.1}.

  We now show \eqref{eq:sum pth power.2} by induction. The case $K=1$ follows directly from \eqref{eq:mu_decomposition} and \eqref{eq:sum pth power.1}.
  For $K>1$, by \eqref{eq:mu_decomposition} and branching property, we obtain that
  \begin{align}
    &\cEX{\sum_{\abs{u}=KM}\mu_{\beta,M,KM}(u)^2}{\F_{(K-1)M}}\\
    &= \cEX{\sum_{\abs{u_1}=(K-1)M}\sum_{\abs{u_2}=M} \mu_{\beta,M,(K-1)M}(u_1)^2\cdot \mu_{\beta,M}^{u_1}(u_2)^2}{\F_{(K-1)M}} \nonumber \\
    &= \sum_{\abs{u_1}=(K-1)M} \mu_{\beta,M,(K-1)M}(u_1)^2\cdot \EX{\sum_{\abs{u_2}=M}\mu_{\beta,M}(u_2)^2}. \label{eq:mainlem2}
  \end{align}
Taking expectations, we derive that
  \begin{align}
    \EX{\sum_{\abs{u}=KM}\mu_{\beta,M,KM}(u)^2} = \EX{\sum_{\abs{u}=(K-1)M}\mu_{\beta,M,(K-1)M}(u)^2}\cdot \EX{\sum_{\abs{u}=M}\mu_{\beta,M}(u)^2}.
  \end{align}
  The equation \eqref{eq:sum pth power.2} then follows from the induction hypothesis.
\end{proof}

We now proceed with the proof of \eqref{eq:main.2}. Without loss of generality, using the fact that $\|\cdot\|_p \le \|\cdot\|_2$ for every $p\in[1,2]$, we assume that $p\ge2$. By Theorem~\ref{thm:KL decomp} and Minkowski's inequality, we have 
\begin{align}
&\norm{\kld{\mu_{\beta,M,N}}{\mu_{\beta,N}}-\EX{\kld{\mu_{\beta,M,N}}{\mu_{\beta,N}}}}_p \nonumber \\
&\leq 
\sum_{K=0}^{\floor{\frac{N}{M}}-1} \norm{\sum_{\abs{u}=KM}\mu_{\beta,M,KM}(u)\cdot \left(\kld{\mu_{\beta,M}^u}{\mu_{\beta,N-KM}^u}-\EX{\kld{\mu_{\beta,M}^u}{\mu_{\beta,N-KM}^u}}\right)}_p. \label{eq:uppbnd}
\end{align}

We now introduce some notation. For all $0\leq K\leq \floor{\frac{N}{M}}-1$, denote
\begin{align*}
d_K^u = \kld{\mu_{\beta,M}^u}{\mu_{\beta,N-KM}^u} \quad\text{and}\quad
d_K = \kld{\mu_{\beta,M}}{\mu_{\beta,N-KM}}
\end{align*}
and 
\begin{align*}
Z_u = \mu_{\beta,M,KM}(u)\cdot \left(d_K^u-\EX{d_K^u}\right).
\end{align*}

We claim that for all $p\geq 1$, the sequence
\begin{align}
\label{eq:claim}
a_K = \norm{\sum_{\abs{u}=KM}Z_u}_p
\end{align}
is summable, with a bound independent of $M$. This will imply that the right-hand side of \eqref{eq:uppbnd} is bounded by the same quantity, which completes proof. To prove this, first observe that $(Z_u)_{\abs{u}=KM}$ is a sequence of iid random variables having zero mean and finite $p$-th moments, for any $p\geq 1$, with respect to $\cEX{\,\cdot}{\F_{KM}}$, by Proposition~\ref{prop:3.3} and the branching property. Denote by $C_1,C_2,\ldots$ some constants possibly depending on $p$ (and $\beta$ and the law of $\Y$). By Rosenthal's inequality \cite[Theorem 3]{Rosenthal70}, we have
\begin{align}
& \cEX{\abs{\sum_{\abs{u}=KM} Z_u}^p}{\F_{KM}} 
\leq C_1\cdot \left\{
\left(\sum_{\abs{u}=KM}\cEX{\abs{Z_u}^p}{\F_{KM}}\right)+
\left(\sum_{\abs{u}=KM}\cEX{\abs{Z_u}^2}{\F_{KM}}\right)^{p/2}\right\} \label{eq:main pf.6}
\end{align}

By the branching property and Proposition~\ref{prop:3.3},
the first term of \eqref{eq:main pf.6} can be bounded by
\begin{align}
\sum_{\abs{u}=KM}\cEX{\abs{Z_u}^p}{\F_{KM}}
&= \sum_{\abs{u}=KM}\mu_{\beta,M,KM}(u)^p\cdot \EX{\abs{d_K-\EX{d_K}}^p} \nonumber \\
&\leq \sum_{\abs{u}=KM}\mu_{\beta,M,KM}(u)^p\cdot C_2 \nonumber \\
&\leq \sum_{\abs{u}=KM}\mu_{\beta,M,KM}(u)^2\cdot C_2, \label{eq:main pf.6.0}
\end{align}
using that $p\ge2$ in the last line.
Taking expectations and applying Lemma~\ref{lem:sum pth power}, we get 
\begin{align}
\EX{\sum_{\abs{u}=KM}\cEX{\abs{Z_u}^p}{\F_{KM}}}
\leq \EX{\sum_{\abs{u}=KM}\mu_{\beta,M,KM}(u)^2}\cdot C_2
\leq r^K\cdot C_2, \label{eq:main pf.6.0.1}
\end{align}
with $r<1$ as in Lemma~\ref{lem:sum pth power}.

We now estimate the second term of \eqref{eq:main pf.6}. By the branching property and Lemma~\ref{lem:log W}, 
\begin{align}
\left(\sum_{\abs{u}=KM}\cEX{\abs{Z_u}^2}{\F_{KM}}\right)^{p/2} 
&= \left(\sum_{\abs{u}=KM}\mu_{\beta,M,KM}(u)^2\cdot \EX{\abs{d_K-\EX{d_K}}^2}\right)^{p/2} \nonumber \\
&\leq \left(\sum_{\abs{u}=KM}\mu_{\beta,M,KM}(u)^2\right)^{p/2}\cdot C_3 \nonumber \\
&\leq \left(\sum_{\abs{u}=KM}\mu_{\beta,M,KM}(u)^2\right)\cdot C_3. \label{eq:main pf.6.1}
\end{align}
The inequality \eqref{eq:main pf.6.1} is because 
\[\sum_{\abs{u}=KM}\mu_{\beta,M,KM}(u)^2\in [0,1]\] 
and $x^{p/2} \leq x$, for all $x\in [0,1]$ and $p \ge 2$. Taking expectations and applying Lemma~\ref{lem:sum pth power}, we get 
\begin{align}
\EX{\left(\sum_{\abs{u}=KM}\cEX{\abs{Z_u}^2}{\F_{KM}}\right)^{p/2}}
\leq \EX{\sum_{\abs{u}=KM}\mu_{\beta,M,KM}(u)^2}\cdot C_3
\leq r^K\cdot C_3. \label{eq:main pf.6.2}
\end{align}

Combining \eqref{eq:main pf.6}, \eqref{eq:main pf.6.0.1} and \eqref{eq:main pf.6.2}, we conclude that
\begin{align*}
\norm{\sum_{\abs{u}=KM} Z_u}_p \leq C_1(C_2+C_3)\cdot r^K.
\end{align*}
This implies that \eqref{eq:claim} is summable and finishes the proof.

\section{Proof of hardness result (Theorem~\ref{th:hardness})}
\label{sec:proof hardness}

In this section, we prove Theorem~\ref{th:hardness}. To do this, we recall two results of asymptotic behaviors of the maximal particle of a branching random walk. 

Under the assumption of the theorem, it is known in Corollary~(3.4) of \cite{BigginsChernoff} that
\begin{equation}
\label{eq:max}
\frac{\max_{|u|=N} X_u}{N} \to m \coloneqq \varphi'(\beta_c),\quad \text{a.s. as $N\to\infty$}.
\end{equation}
Furthermore, we have the following tail estimate, which easily follows from a union bound together with Chernoff's bound (see e.g.~the proof of Theorem~2 in \cite{ZeitouniLNBRW}): there exists a constant $c>0$ such that 
\begin{equation}
\label{eq:max_tail}
\forall x\ge 0: \mathbb P\left(\max_{|u|=N} X_u \ge mN + x\right) \le e^{-c x}.
\end{equation}

The key to Theorem~\ref{th:hardness} is the following observation:

\begin{lemma}
\label{lem:sqrt}
Let $\beta > \beta_c$ and assume $\beta_c\in \mathcal D(\varphi)^\circ$. Let $u$ be a particle sampled according to the Gibbs measure $\mu_{\beta,N}$ and let $w$ be its ancestor at generation $\floor{N/2}$. Then there exists a positive random variable $Z$ with continuous distribution function such that
\[
\frac{X_u - X_{w} - mN/2}{\sqrt{N}} \to Z,\quad\text{in law as $N\to\infty$}.
\]
\end{lemma}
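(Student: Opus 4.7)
The plan is to invoke the classical scaling limit for the ancestral trajectory of a near-maximal particle in the branching random walk. In the supercritical regime $\beta > \beta_c$, a particle $u = u_N$ sampled from $\mu_{\beta,N}$ is \emph{near-maximal}, i.e., $X_u - \max_{|v|=N}X_v = O_{\mathbb P}(1)$; this follows from the frozen-phase structure of the Gibbs measure (the non-uniform-integrability of $W_{\beta,n}$ for $\beta > \beta_c$, together with the convergence of the extremal process to a decorated Poisson point process). Combined with \eqref{eq:max}, it implies $(X_u - mN)/\sqrt N \to 0$ in probability, so the contribution of the endpoint to the limit is negligible.

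Next, I would invoke the functional scaling limit: with $\sigma^2 = \varphi''(\beta_c) \in (0,\infty)$ (finite because $\beta_c \in \mathcal D(\varphi)^\circ$), the rescaled ancestral trajectory satisfies
\[
\left(\frac{X_{u(\lfloor Nt\rfloor)} - m\lfloor Nt\rfloor}{\sqrt N}\right)_{t \in [0,1]} \xrightarrow{\mathrm{law}} -\sigma\, (e(t))_{t\in[0,1]},
\]
in the space of continuous functions, where $e$ is a standard Brownian excursion. This is the branching random walk analogue of the Chauvin--Rouault theorem for branching Brownian motion. One way to establish it is via the spine decomposition: the many-to-one lemma at the critical inverse temperature $\beta_c$ represents the ancestral line as a random walk with drift $m$ and step variance $\sigma^2$, and conditioning on the particle being near-maximal (equivalently, on the spine staying below the $mt$-line and returning close to $mN$ at the endpoint) yields, after an invariance-principle argument of Iglehart--Bolthausen type, a Brownian excursion limit.

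Evaluating the above at $t = 1/2$ and $t = 1$ and applying the continuous mapping theorem,
\[
\frac{X_u - X_w - mN/2}{\sqrt N} = \frac{X_u - mN}{\sqrt N} - \frac{X_w - mN/2}{\sqrt N} \xrightarrow{\mathrm{law}} 0 - (-\sigma e(1/2)) = \sigma e(1/2) =: Z.
\]
A standard Brownian excursion on $[0,1]$ satisfies $e(1/2) > 0$ almost surely and its one-dimensional marginal at $t = 1/2$ has a continuous density (of Rayleigh type), so $Z$ is a strictly positive random variable with continuous distribution function, as claimed. The main technical hurdle I anticipate is the trajectory-scaling-limit step: while classical for the exact argmax in BBM, transferring it to a Gibbs-sampled particle in a general BRW requires controlling the $O_{\mathbb P}(1)$ gap between the Gibbs sample and the argmax and checking that this does not perturb the trajectory at the $\sqrt N$ scale.
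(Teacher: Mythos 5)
Your argument follows essentially the same route as the paper: show the endpoint discrepancy $X_u-mN$ is negligible at scale $\sqrt N$, invoke the functional scaling limit of the ancestral trajectory of a Gibbs-sampled particle to a (multiple of a) Brownian excursion, and read off the value at $t=1/2$. The paper does exactly this, citing Chen, Madaule and Mallein for the excursion limit and A\"id\'ekon for $X_u-mN=O(\log N)$ in probability; so your ``main technical hurdle'' is an existing theorem (for Gibbs-sampled particles directly, not transferred from the argmax), and no spine/Iglehart--Bolthausen construction is needed. One step of yours does not work as written: near-maximality $X_u-\max_{|v|=N}X_v=O_{\mathbb P}(1)$ combined with \eqref{eq:max} only gives $(X_u-mN)/N\to 0$, since \eqref{eq:max} is a first-order law of large numbers and says nothing at precision $o(\sqrt N)$; to get $(X_u-mN)/\sqrt N\to 0$ you need the second-order result that the maximum is $mN-\tfrac{3}{2\beta_c}\log N+O_{\mathbb P}(1)$ (equivalently the $O(\log N)$ statement the paper cites). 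With that reference substituted, your proof coincides with the paper's.
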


\begin{proof}
This is a consequence of a result by Chen, Madaule and Mallein \cite{ChenMadauleMallein}. These authors show the following fact: if $u$ is sampled according to the Gibbs measure $\mu_{\beta,N}$, and $X_u(t)$ denotes the position of its ancestor at generation $\lfloor tN\rfloor$, and if we define
\[
Z^N_t \coloneqq \frac{mtN - X_u(t)}{\sqrt N},\quad t\in[0,1],
\]
then $(Z^N_t)_{t\in[0,1]}$ converges in law (w.r.t. Skorokhod's topology) to a multiple of a Brownian excursion as $N\to\infty$. Note that the assumptions in their article are implied by our hypothesis that $\beta_c\in \mathcal D(\varphi)^\circ$ and the fact that in our branching random walk, the number of offspring of a particle is deterministic. Now, we also have that $X_u - mN = O(\log N)$ in probability (see e.g.~\cite{Aidekon13}), so that
\[
\frac{X_u - mN}{\sqrt N} \xrightarrow{\text{law}} 0,\quad \text{as $N\to\infty$}.
\]
Together, both results imply the lemma.
\end{proof}

\begin{proof}[Proof of Theorem~\ref{th:hardness}]
Assume that we are given an algorithm $(v(k))_{k\ge1}$ that samples a vertex according to a random probability measure $\tilde \mu_{\beta,N}$ approximating the Gibbs measure $\mu_{\beta,N}$. By Lemma~\ref{lem:entropy}, it follows that $\kld{\tilde \mu_{\beta,N}}{\mu_{\beta,N}} \to 0$ in probability as $N\to\infty$. Hence, by Pinsker's inequality (see e.g. Theorem~4.19~in~\cite{ConIneq13}), the total variation distance between $\tilde \mu_{\beta,N}$ and $\mu_{\beta,N}$ goes to 0 as well in probability, as $N\to\infty$. It follows that Lemma~\ref{lem:sqrt} holds as well for $u$ sampled according to $\tilde \mu_{\beta,N}$.

Let $\delta > 0$. For $z>0$, call a vertex $w \in \partial \T_{\lfloor N/2\rfloor}$ \emph{$z$-exceptional} if it has a descendant $u\in\partial \T_N$ such that $X_u - X_{w} - mN/2 > z\sqrt N$. By the preceding paragraph, there exists $z > 0$ such that for large enough $N$, the algorithm finds a vertex $u$ whose ancestor $w$ at generation $\lfloor N/2\rfloor$ is $z$-exceptional with probability at least $1-\delta$. Hence, it is enough to show that any algorithm which solves the simpler problem of finding a $z$-exceptional vertex at generation $\lfloor N/2\rfloor$ has a running time at least $e^{z'\sqrt{N}}$ with probability $1-\delta$, for some $z'>0$. We will now show that the statement of the theorem holds even for this simpler problem. For this, we use an argument similar to the one in Section~3 of \cite{AB&M20}. We first present the argument in an informal way.

Denote by $E_w$ the event that a given vertex $w\in \partial \T_{\lfloor N/2\rfloor}$ is $z$-exceptional. Note that this event only depends on the displacements of the descendants of $w$. Hence, the events $(E_w)_{w\in \partial \T_{\lfloor N/2\rfloor}}$ are independent by the branching property. Furthermore, by \eqref{eq:max_tail}, for each $w\in \partial \T_{\lfloor N/2\rfloor}$, we have $\PP(E_w) \le e^{-cz\sqrt{N/2}}$ for some $c>0$. Finally, in order to determine whether a vertex $w$ is $z$-exceptional, the algorithm has to explore at least one vertex in the subtree of the vertex $w$. Hence, the running time of the algorithm is bounded from below by the number of vertices $w$ that have to be probed in order to find a $z$-exceptional vertex. But this quantity follows the geometric distribution with success probability $\PP(E_w)\le e^{cz\sqrt{N/2}}$. Altogether, for any $z'<cz/\sqrt 2$ and for $N$ sufficiently large, this shows that the running time $\tau$ of any algorithm solving the simpler problem is at least $e^{z'\sqrt{N}}$ with probability $1-\delta$. The statement readily follows.

We now make this argument formal. Recall that, by definition, an algorithm is a stochastic process $(v(n))_{n\ge0}$ previsible with respect to the filtration $\tilde{\F}$, defined by
\[
\tilde{\F}_k = \sigma\left(v(1),\ldots,v(k);\,X_{v(1)},\ldots,X_{v(k)};\,U_1,\ldots,U_{k+1}\right)
\]
where $(U_k)_{k\geq 1}$ is a sequence of iid uniform random variables on $[0,1]$, independent of the branching random walk $\mathbf{X}$. We now define a larger filtration $\G$. For this, define for any $v\in\T_N$ the following set of vertices:
\[
\mathcal V_v = \begin{cases}
w\in \T_N: |v\wedge w| \ge \lfloor N/2\rfloor,& \text{if $|v|\ge \lfloor N/2\rfloor$}\\
v, & \text{otherwise}.
\end{cases}
\]
Note that $v\in \mathcal V_v$ for every $v\in \T_N$.
We then set
\[
\G_k = \sigma\left(v(1),\ldots,v(k);\,(X_{w})_{w\in\mathcal V_{v(1)}},\ldots,(X_{w})_{w\in\mathcal V_{v(k)}};\,U_1,\ldots,U_{k+1}\right).
\]
Note that $\tilde{\F}_k \subset \G_k$ for all $k\ge0$ --- heuristically, $\G_k$ adds to $\tilde{\F}_k$ the information about the values in the branching random walk of all vertices contained in $\mathcal V_{v(i)}$, $i=1,\ldots,k$. Note that trivially, the stochastic process $v(n)_{n\ge0}$ is still previsible with respect to this larger filtration $\G$.

Now say that $\mathcal V_v$ is $z$-exceptional if $|v|\ge \lfloor N/2\rfloor$ and the ancestor of $v$ at generation $\lfloor N/2\rfloor$ is $z$-exceptional in the sense defined above --- note that this definition does not depend on the choice of $v$. Define
\[
\tau' = \inf\{k\ge 0: \mathcal V_{v(k)}\text{ is $z$-exceptional}\},
\]
and note that $\tau'$ is a stopping time with respect to the filtration $\G$.
Now, by the equality of  events
\[
\{\tau' = k\} = \{\text{$\mathcal V_{v(k)}$ is $z$-exceptional}\}\cap \{\tau'>k-1\},
\]
and since $\{\tau'>k-1\}\in\G_{k-1}$ and $v(k)$ is $\G_{k-1}$-measurable, we have
\[
\PP(\tau'=k\,|\,\G_{k-1}) = \sum_{v\in\T_N} \PP(\text{$\mathcal V_v$ is $z$-exceptional}\,|\,\G_{k-1})\Ind_{(v(k) = v,\ \tau' > k-1)}.
\]
Now, for any $v\in\T_N$, if $|v| < \lfloor N/2\rfloor$, or if $v \in \bigcup_{i=0}^{k-1} \mathcal V_{v(i)}$, the above probability is zero, because none of $\mathcal V_{v(i)}$, $i=0,\ldots,k-1$ are $z$-exceptional on the event $\{\tau' > k-1\}$. On the other hand, if $v\not\in \bigcup_{i=0}^{k-1} \mathcal V_{v(i)}$, then, by the branching property, the above conditional probability is equal to the unconditioned probability that $\mathcal V_v$ is $z$-exceptional, which is bounded by $e^{-cz\sqrt{N/2}}$ by \eqref{eq:max_tail}. Hence, we get in total that
\[
\PP(\tau' = k|\,\tau' > k-1) \le e^{-cz\sqrt{N/2}},
\]
and $\tau'$ is dominated from below by a geometric random variable with success probability $e^{-cz\sqrt{N/2}}$. The proof now continues as above.
\end{proof}

\section{Proof of Lemma~\ref{lem:entropy}}
\label{sec:proof_lemma}

We first consider the case $\beta \in [0,\beta_c)$. Define
\[
D_{\beta,N} = \dv{}{\beta} W_{\beta,N} = \sum_{|u|=n} (X_u - \varphi'(\beta)N) e^{\beta X_u - \varphi(\beta)N}.
\]
We express the entropy by
\begin{align*}
H(\mu_{\beta,N}) &= \frac{1}{W_{\beta,N}} \sum_{|u|=N} (\varphi(\beta)N-\beta X_u)e^{\beta X_u - \varphi(\beta)N} + \log W_{\beta,N} \\
&= (\varphi(\beta)-\beta\varphi'(\beta))N - \beta \frac{D_{\beta,N}}{W_{\beta,N}} +\log W_{\beta,N}.
\end{align*}
By the assumption on $\beta$, we have $\varphi(\beta)-\beta\varphi'(\beta) > 0$. Furthermore, $W_{\beta,N}$ converges almost surely to a positive random variable as $N\to\infty$ by Fact~\ref{fact:W} and $D_{\beta,N}$ converges almost surely as well as $N\to\infty$, see \cite{Biggins92}. The first statement follows.

Now let $\beta > \beta_c$ and assume $\beta_c\in \mathcal D(\varphi)^\circ$. Define
\[
\widetilde W_{\beta,N} = \sum_{|u|=N} e^{\beta (X_u - \varphi'(\beta_c) N - \frac{3}{2\beta_c} \log N)}.
\]
We now write
\[
H(\mu_{\beta,N}) = \frac{1}{\widetilde W_{\beta,N}}\sum_{|u|=N} \left(-\beta \left(X_u - \varphi'(\beta_c)N - \frac{3}{2\beta_c} \log N\right)\right) e^{\beta (X_u - \varphi'(\beta_c)N - \frac{3}{2\beta_c} \log N)} + \log \widetilde W_{\beta,N}.
\]
Fix $\beta'\in(\beta_c,\beta)$. Then there exists $C>0$, such that $xe^{-\beta x} \le C e^{-\beta' x}$ for every $x\in\R$. We have
\[
H(\mu_{\beta,N}) \le C\frac{\widetilde W_{\beta',N}}{\widetilde W_{\beta,N}} + \log \widetilde W_{\beta,N}.
\]
Now, $\widetilde W_{\beta,N}$ and $\widetilde W_{\beta',N}$ converge almost surely as $N\to\infty$ to positive random variables, see \cite{Madaule17}. The second statement follows.

\section{Open questions and further directions} \label{sec:open questions}

In this section, we state a few questions and further directions related to the CREM and the branching random walks for future study.

\begin{enumerate}

\item It was conjectured in \cite{AB&M20} that for the CREM,
there exists a threshold $\beta_G\geq 0$ such that its Gibbs measure with parameter $\beta$ can be
efficiently approximated if $\beta < \beta_G$ and cannot if $\beta > \beta_G$. A conjectured explicit expression\footnote{There is a typo in the expression of $\beta_G$ in the case of the CREM in Item 1, Section 5 of \cite{AB&M20}. The definition of $t_G$ should be replaced by the following one: $t_G = \sup\{t\in[0,1]: A(s) = \hat A(s)\text{ for all }s\le t\}$.} of $\beta_G$ appears in  Item 1, Section 5 of \cite{AB&M20}. Our results confirm the conjecture in the case where the CREM has correlation function $A(x)=x$, with the notion of \emph{approximation} from Definition~\ref{def:approximation}. Moreover, our results imply that $\beta_G = \beta_c = \sqrt{2\log 2}$, the \emph{(static) critical inverse temperature}.
One can check that $\beta_c$ equals the expression of $\beta_G$ from \cite{AB&M20}. Ongoing work of the authors is trying to generalize the result to the CREM with a general correlation function.

\item Back to the branching random walk, one might be interested in the near critical regime to understand how the transition happens near $\beta_c$. To do this, one can take a sequence $\beta(N) = \beta_c-N^{-\delta}$ for some $\delta>0$. It might be interesting to study the time complexity of any algorithm approximating the Gibbs measure $\mu_{\beta(N),N}$ in the sense of Definition~\ref{def:approximation}. One should expect a phase transition at $\delta=1/2$, in line with a phase transition for the asymptotics of the partition function obtained by Alberts and Ortgiese \cite{albertsNearcriticalScalingWindow2013}. See also the introduction of Pain \cite{Pain18}. This should be related to Pemantle's \cite{PemantleSearch09} study of optimization algorithms discussed in Section~\ref{sec:related work}.
\end{enumerate}

\paragraph{Acknowledgements.} We are grateful to an anonymous referee for several helpful suggestions improving the presentation.




\providecommand{\bysame}{\leavevmode\hbox to3em{\hrulefill}\thinspace}
\providecommand{\MR}{\relax\ifhmode\unskip\space\fi MR }
\providecommand{\MRhref}[2]{%
  \href{http://www.ams.org/mathscinet-getitem?mr=#1}{#2}
}
\providecommand{\href}[2]{#2}

\end{document}